\def\H{\mathcal{H}}
\def\N{\mathbb N}
\def\R{\mathbb R}
\def\C{\mathbf{C}}
\def\PHI{\mathbf{\Phi}}
\def\La{\Lambda}
\def\Om{\Omega}
\def\S{\Sigma}
\def\cl{{\rm cl}}                            
\def\X{\boldsymbol{\mathcal E}}
\def\a{\alpha}
\def\de{\delta}
\def\e{\varepsilon}
\def\om{\omega}
\def\vphi{\varphi}
\def\Lip{{\rm Lip}}
\def\II{{\rm II}}
\def\tr{{\rm tr}}
\def\Div{{\rm div}\,}
\def\Id{{\rm Id}\,}
\def\dist{{\rm dist}}
\def\diam{{\rm diam}}
\def\loc{{\rm loc}}
\def\diam{{\rm diam}}
\def\cof{{\rm cof}\,}
\def\spt{{\rm spt}}
\def\pa{\partial}
\def\cc{\subset\subset}
\def\C{\mathbf{C}}
\def\D{\mathbf{D}}
\def\exc{\mathbf{exc}}
\theoremstyle{plain}
\newtheorem{theorem}{Theorem}[section]
\newtheorem{lemma}[theorem]{Lemma}
\newtheorem{proposition}[theorem]{Proposition}
\newtheorem*{theorem*}{Theorem}
\newtheorem*{corollary*}{Corollary}
\theoremstyle{definition}
\newtheorem{definition}[theorem]{Definition}
\newtheorem{remark}[theorem]{Remark}
\newtheorem*{notation*}{Notation}
\numberwithin{equation}{section}
\numberwithin{figure}{section}
\title[Dimension of singular sets in  variational problems with free boundaries]{Dimensional estimates for singular sets in geometric variational problems with free boundaries}
\author[G. De Philippis]{Guido De Philippis}
\address{Institut f\"ur Mathematik, Universit\"at Z\"urich --
CH-8057 Z\"urich}
\email{guido.dephilippis@math.uzh.ch}
\author[F. Maggi]{Francesco Maggi}
\address{Department of Mathematics, The University of Texas at Austin,  2515 Speedway Stop C1200, Austin, Texas 78712-1202, USA}
\email{maggi@math.utexas.edu}
\begin{document}

\begin{abstract} We show that singular sets of free boundaries arising in codimension one anisotropic geometric variational problems  are $\mathcal H ^{n-3}$-negligible, where $n$ is the ambient space dimension. In particular our results apply to capillarity type problems, and establish everywhere regularity in the three-dimensional case.
\end{abstract}

\maketitle

\section{Introduction}  In \cite{dephilippismaggiCAPILLARI}, having in mind applications to capillarity problems and to relative isoperimetric problems, we studied the regularity of free boundaries in anisotropic geometric variational problems. The main result contained in \cite{dephilippismaggiCAPILLARI} asserts that free boundaries are regular outside closed sets of vanishing \(\H^{n-2}\)-measure. In this paper we improve upon this result by showing  $\H^{n-3}$-negligibility of singular sets, see Theorem \ref{thm main} below.

The ``interior part'' of this statement dates back to \cite{schoensimonalmgren}. The boundary case is addressed here by combining the set of ideas introduced in \cite{schoensimonalmgren} with the $\H^{n-2}$-negligibility we have obtained in \cite{dephilippismaggiCAPILLARI} (see, in particular, Lemma \ref{lemma II implica n-3} below).

We note that singular sets must necessarily be smaller than merely $\H^{n-3}$-negligible. Indeed, a general argument due to Almgren (and appeared in \cite[Lemma 5.1]{whitemodp}) implies that the set of \(s>0\) such that singular sets of minimizers of a given elliptic functional are \(\H^{s}\)-negligible is open. At the same time, the cone over ${\bf S}^1\times{\bf S}^1\subset\R^4$ minimizes a suitable elliptic anisotropic functional \cite{MorganClifford}. This example may lead to conjecture that singular sets of arbitrary anisotropic functionals have Hausdorff dimension at most $n-4$, although we are not aware of further evidence supporting this possibility.

The $\H^{n-3}$-negligibility of the singular set, although not optimal, has two interesting consequences. Firstly, and obviously, it implies everywhere regularity in \(\R^3\); secondly, it provides the  needed regularity in order to exploit second variation arguments in the study of geometric properties of minimizers; see for example \cite{SternZum2} and Lemma \ref{lemma potenziale stima II} below (actually \(\H^{n-3}\)-locally finiteness of the singular set would be enough for this, see for instance \cite[Section 4.7.2]{EvansGariepyBOOK}).

We now define the class of functionals and the notion of minimizers that we shall use.

\begin{definition}[Regular elliptic integrands] Given an open set $A\subset\R^n$, $\lambda\ge 1$ and $\ell\ge0$, we consider the family
  \(
  \X(A,\lambda,\ell)
  \)
  of  functions $\Phi:\cl(A)\times\R^n\to[0,\infty]$ such that $\Phi(x,\cdot)$ is convex and positively one-homogeneous on $\R^n$ with  $\Phi(x,\cdot)\in C^{2,1}(\mathbf S^{n-1})$ for every $x\in\cl(A)$, and such that the following properties hold for every $x\,,y\in \cl(A)$, $\nu,\nu'\in \mathbf S^{n-1}$, and $e\in\R^n$:
  \[
  \begin{gathered}
  \frac1\lambda\le\Phi(x,\nu)\le\lambda\,,
  \\
   |\Phi(x,\nu)-\Phi(y,\nu)|+|\nabla\Phi(x,\nu)-\nabla\Phi(y,\nu)|\le \ell\,|x-y|\,,
  \\
  |\nabla\Phi(x,\nu)|+\|\nabla^2\Phi(x,\nu)\|+\frac{\|\nabla^2\Phi(x,\nu)-\nabla^2\Phi(x,\nu')\|}{|\nu-\nu'|}\le\lambda\,,
  \\
  \end{gathered}
\]
and
   \begin{equation}
   \label{elliptic}
  \nabla^2\Phi(x,\nu)[e]\cdot e\ge\frac{\big|e-(e\cdot\nu )\nu\big|^2}{\lambda}\,.
  \end{equation}
  \end{definition}
  In the above definition  $\nabla\Phi$ and $\nabla^2\Phi$ stand for the gradient and Hessian of $\Phi$ in the $\nu$-variable, $\|L\|=\sup\{Le:|e|=1\}$ is the operator norm of a linear map $L:\R^n\to\R^n$, $L[e]$ is the action of $L$ on $e\in\R^n$, and $\cl(A)$ is the closure of $A$. We also set
 \[
 \X_*(\lambda)=\X(\R^n,\lambda, 0)\,,
 \]
 for the class of {\it regular autonomous elliptic integrand} (indeed, $\ell=0$ forces $\Phi(x,\nu)=\Phi(\nu)$). We shall regard $\X_*(\lambda)$ as a subset of $C^{2,1}(\mathbf S^{n-1})$ by the obvious identification of a one-homogeneous function with its trace on the sphere. With this identification it is immediate to check  that $\X_*(\lambda)$ is a compact subset with respect to uniform convergence on $\mathbf{S}^{n-1}$. Finally, if $\Phi\in\X(A,\lambda,\ell)$ and $E$ is a set of locally finite perimeter in $A$, then we set
 \[
 \PHI(E;G)=\int_{G\cap\pa^*E}\Phi(x,\nu_E(x))\,d\H^{n-1}(x)\in[0,\infty]\,,\qquad\forall G\subset A\,.
 \]
 Here $\pa^*E$ denotes the reduced boundary of $E$ in $A$ and $\nu_E$ is the measure-theoretic outer unit normal to $E$; see \cite[Chapter 15]{maggiBOOK}.

 \begin{definition}[Almost-minimizers]\label{def almost min} Let an open set $A$ and an open half-space $H$ in $\R^n$ be given (possibly \(H=\R^n\)), together with $r_0\in(0,\infty]$ and $\Lambda\ge0$. Given $\Phi\in\X(A\cap H,\lambda,\ell)$ and a set $E\subset H$ of locally finite perimeter in \(A\), one says that $E$ is a {\it $(\La,r_0)$-minimizer of $\PHI$ in $(A,H)$}, if
 \[
 \PHI(E;H\cap W)\le \PHI(F;H\cap W)+\Lambda\,|E\Delta F|\,,
 \]
 whenever $F\subset H$, $E\Delta F\cc W$, and $W\cc A$ is open with $\diam(W)<2r_0$;
 \begin{figure}
   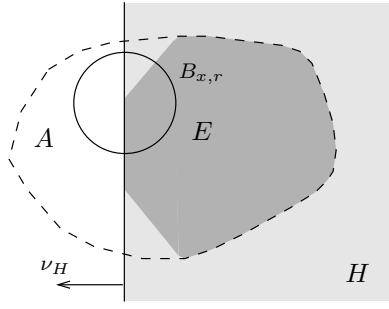\caption{{\small The situation in Definition \ref{def almost min}: roughly speaking, $E$ minimizes $\PHI$ with respect to perturbations $F$ which agree with $E$ on $H\cap\pa B_{x,r}$ and are allowed to freely move the boundary of $E$ close to $B_{x,r}\cap\pa H$. In other words, we impose a Dirichlet condition on $H\cap\pa B_{x,r}$ and a Neumann condition of $B_{x,r}\cap\pa H$. }}\label{fig min}
 \end{figure}
 see Figure \ref{fig min}.
 When $\La=0$, and $r_0=+\infty$, one simply says that $E$ is a {\it minimizer of $\PHI$ in $(A,H)$}.
 \end{definition}

\begin{remark} As proved in \cite[Lemma 6.1]{dephilippismaggiCAPILLARI}, up to local diffeomorphisms, minimizers of capillarity-type problems fall in the framework of Definition \ref{def almost min}. Other applications include relative isoperimetric problems in Riemannian and Finsler geometry.
\end{remark}

\begin{remark}\label{rmk: halfspace}
Since the class \(  \X(A\cap H,\lambda,\ell)\) is invariant by isometries of \(\R^n\) (in the sense that, if \(f(x)=x_0+R[x]\), \( R\in O(n)\), then
\(E\) is a   $(\La,r_0)$-minimizer of $\PHI$ in $(A,H)$ if and only if \(f(E)\)  is a  $(\La,r_0)$-minimizer of $\PHI^f$ in $(f(A),f(H))$ where
\(\Phi^f(x,\nu)=\Phi(f^{-1}(x),R^{-1} \nu)\) belongs to  \(\X(f(A)\cap f(H),\lambda,\ell)\), see \cite[Lemma 2.18]{dephilippismaggiCAPILLARI}) and we are interested in boundary regularity, in the sequel we can and do assume that \(H\) is a  fixed half-space with \(0\in \partial H\).

\end{remark}

Let now $E$ be a $(\La,r_0)$-minimizer of $\PHI$ in $(A,H)$ of some $\Phi\in\X(A\cap H,\lambda,\ell)$, and set
 \[
 M_A(E)=A\cap\cl(H\cap\pa E)\,.
 \]
 The {\it regular set $R_A(E)$ of $E$ in $A$} is defined by
 \[
 R_A(E)=\bigg\{x\in M_A(E):
  \begin{array}{l}
  \textrm{there exists \(r_x>0\) such that \(M_A(E)\cap B_{x,r_x}\)}
  \\
  \textrm{is a \(C^1\)-manifold with boundary contained in \(\pa H\)}
  \end{array}\bigg\}\,,
 \]
 while $\S_A(E)=M_A(E)\setminus R_A(E)$ is called the {\it singular set $\S_A(E)$ of $E$ in $A$}. In this way, $\S_A(E)$ is relatively  closed in  $A$. We shall also set
 \[
 R_G(E)=R_A(E)\cap G\,,\qquad \S_G(E)=\S_A(E)\cap G\,,\qquad\forall G\subset A\,.
 \]
 By combining the results of \cite{schoensimonalmgren} for the interior situation with the ones of \cite{dephilippismaggiCAPILLARI} for the boundary situation, one sees that $E\cap A$ is (equivalent to) an open set, that $A\cap\pa E\cap\pa H$ is a set of finite perimeter in $\pa H$, and that
 \begin{eqnarray}\label{n-3 interior}
   \H^{n-3}(\S_{A\cap H}(E))=0\,,&&\qquad\mbox{by \cite{schoensimonalmgren}}\,,
   \\\label{n-2 boundary}
   \H^{n-2}(\S_{A\cap\pa H}(E))=0\,,&&\qquad\mbox{by \cite{dephilippismaggiCAPILLARI}}\,,
 \end{eqnarray}
 with $\nabla\Phi(x,\nu_E)\cdot\nu_H=0$ at every $x\in R_{A\cap\pa H}(E)$. Moreover, one has a characterization of the regular and  singular sets in terms of the following notion of excess: given $x\in A$ and $r<\dist(x,\pa A)$ and denoting by \(B_{x,r}\) the open ball centered at \(x\) and with radius \(r\),  we define {\it spherical excess of $E$ at the point $x$, at scale $r$, relative to $H$} as
 \[
 {\bf exc}^H(E,x,r)=\inf\Big\{\frac1{r^{n-1}}\int_{B_{x,r}\cap H\cap\pa^*E}\frac{|\nu_E-\nu|^2}2\,d\H^{n-1}:\nu\in \mathbf S^{n-1}\Big\}\,.
 \]
 Then, for  positive constants $\e=\e(n,\lambda)$ and \(c=c(n,\lambda)\), we have that
 \begin{equation}\label{reg exc}
  {\bf exc}^H(E,x,r)<\e\quad \Longrightarrow \quad M_A(E)\cap B_{x, c r}\subset R_A(E)\,,
 \end{equation}
  see \cite[Theorem 3.1]{dephilippismaggiCAPILLARI}. In particular
 \begin{equation}\label{singular set char}
 \S_A(E)=\Big\{x\in M_A(E):\liminf_{r\to 0^+}{\bf exc}^H(E,x,r)\ge\e(n,\lambda)\Big\}\,\,,
 \end{equation}

 \begin{theorem}\label{thm main}
 If $\Phi\in\X(A,\lambda,\ell)$ and $E$ is a $(\La,r_0)$-minimizer of $\PHI$ in $(A,H)$, then
 \[
 \H^{n-3}(\S_{A\cap\pa H}(E))=0\,.
 \]
 \end{theorem}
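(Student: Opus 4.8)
The plan is to argue by Federer dimension reduction, the new ingredient at the free boundary being an $L^2$-estimate on the second fundamental form of the regular set, in the spirit of \cite{schoensimonalmgren} but fed by the a priori $\H^{n-2}$-negligibility \eqref{n-2 boundary}. By Remark \ref{rmk: halfspace} I may take $H$ fixed with $0\in\pa H$; since the statement is local in $A$ and compatible with rescalings, I would argue by contradiction and assume $\H^{n-3}(\S_{A\cap\pa H}(E))>0$. Blowing up at a point $x_0\in\S_{A\cap\pa H}(E)$, the deficit $\La\,|E\Delta F|\le\La\,|W|$ in Definition \ref{def almost min} is of lower order with respect to $\PHI(\,\cdot\,;H\cap W)$ as $\diam(W)\to0$, so every blow-up is a \emph{cone} minimizing the autonomous integrand $\Phi(x_0,\cdot)\in\X_*(\lambda)$ in $(\R^n,H)$, with $0$ in its boundary singular set; here one uses \eqref{reg exc} to get upper semicontinuity of the singular set along this convergence, together with the compactness of $\X_*(\lambda)$.

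The heart of the matter is a curvature estimate: there should be constants $c_0=c_0(n,\lambda)$ and $\rho_0>0$ such that
\[
\int_{M_A(E)\cap B_{x,\rho}}|\II|^2\,d\H^{n-1}\ \le\ c_0\,\rho^{\,n-3}\,,\qquad\forall\,x\in M_A(E)\,,\ \rho<\rho_0\,,
\]
where $\II$ denotes the second fundamental form of the regular hypersurface $R_A(E)$, which is defined $\H^{n-1}$-a.e.\ on $M_A(E)$ and extends continuously up to $\pa H$ by the boundary regularity theory behind \eqref{reg exc} and by \eqref{n-2 boundary}. This is where the circle of ideas of \cite{schoensimonalmgren} enters, now carried out up to the Neumann boundary: one would use the second variation inequality for $\PHI$, the degenerate ellipticity \eqref{elliptic} to control $|\II|^2$ from above, the a priori negligibility estimates \eqref{n-3 interior} and \eqref{n-2 boundary} to legitimize the cut-off and integration-by-parts steps across the singular set, and the free boundary condition $\nabla\Phi(x,\nu_E)\cdot\nu_H=0$, valid at every point of $R_{A\cap\pa H}(E)$, to handle the boundary terms these steps generate.

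Granted this estimate, Lemma \ref{lemma II implica n-3} closes the argument. Since the bound is scale invariant at the exponent $n-3$, it passes to the blow-up cones; Federer's combinatorics (using the upper semicontinuity of the singular set, and \eqref{n-3 interior} applied in $\R^3$, which forces such cones to be smooth in the interior) reduce matters to excluding a minimizing cone $C=C_0\times\R^{n-3}$, with $\R^{n-3}\subset\pa H$ and $C_0$ a minimizing cone in $(\R^3,H_0)$, $H_0=H\cap\R^3$, whose only singularity is the vertex $0\in\pa H_0$. For such a $C$, dilation invariance turns the estimate into $\int_{M(C)\cap B_{0,1}}|\II_C|^2\,d\H^{n-1}\le c_0$; combining this with Fubini's theorem and polar coordinates in $\R^3$ (the $\R^{n-3}$-factor being flat, so that $|\II_C|=|\II_{C_0}|$ is $(-1)$-homogeneous) gives
\[
c_0\ \ge\ \int_{M(C)\cap B_{0,1}}|\II_C|^2\,d\H^{n-1}\ \ge\ c(n)\Big(\int_0^{1/2}\frac{dr}{r}\Big)\int_{M(C_0)\cap\mathbf S^{2}}|\II_{C_0}|^2\,d\H^{1}\,,
\]
and, the first integral on the right being divergent, this forces $\II_{C_0}\equiv0$ on the link $M(C_0)\cap\mathbf S^{2}$. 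Hence $C_0$ is a half-plane meeting $\pa H_0$ at the angle prescribed by $\nabla\Phi_0(\nu)\cdot\nu_{H_0}=0$, so $0$ is a regular point of $C_0$, contradicting $0\in\S_{\R^3\cap\pa H_0}(C_0)$; therefore $\H^{n-3}(\S_{A\cap\pa H}(E))=0$.

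The decisive difficulty is the curvature estimate of the second paragraph \emph{up to the free boundary}. In the interior this is the method of \cite{schoensimonalmgren}, but at $\pa H$ one must differentiate the anisotropic Neumann condition and absorb the boundary terms it generates, which have no interior counterpart, using only the tangential and degenerate information carried by \eqref{elliptic}; moreover the improvement of the exponent from $n-2$ to $n-3$ relies essentially on having the a priori $\H^{n-2}$-negligibility of \cite{dephilippismaggiCAPILLARI}, rather than a mere $\H^{n-1}$-negligibility, available when performing the cut-off and exhaustion steps in the second variation argument near $\S_{A\cap\pa H}(E)$. Establishing the compactness and upper-semicontinuity properties of minimizers and their singular sets needed to run the dimension reduction in the free boundary setting is a further, more routine, task.
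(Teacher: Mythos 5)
Your plan has two genuine gaps, and together they show the proposed route does not go through as stated. First, the dimension-reduction scaffolding assumes that blow-ups at singular points are minimizing \emph{cones} (and later that one can split off an $\R^{n-3}$ factor \`a la Federer). For a general anisotropic integrand $\Phi\in\X_*(\lambda)$ there is no monotonicity formula, so blow-up limits are not known to be homogeneous; this is precisely why \cite{schoensimonalmgren}, and the paper following it, avoid Federer reduction altogether and instead run a continuity method in the space of integrands: $\X_{**}(\lambda)$ is shown to be open (Proposition \ref{lemma apertura}) and closed (Proposition \ref{lemma chiusura}) in the connected set $\X_*(\lambda)$, and to contain the area integrand by \cite{gruter2}. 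Your compactness/upper-semicontinuity statements for singular sets (the analogue of Lemma \ref{lemma compactness}) are fine, but they do not substitute for the missing conicity of blow-ups.

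Second, and decisively, the curvature estimate ``up to the free boundary'' cannot be legitimized by \eqref{n-2 boundary} in the way you describe. In the second variation argument the singular set must be excised by cut-offs $\psi$ with $\int_{R_A(E)}|\nabla\psi|^2$ small; with the density bound $P(E;B_{x,r})\le C(n,\lambda)r^{n-1}$, a covering of $\S_A(E)\cap\spt\zeta$ by balls of radii $\e_k$ contributes $\sum_k \e_k^{\,n-3}$ to this Dirichlet energy. Knowing only $\H^{n-2}(\S_{A\cap\pa H}(E))=0$ lets you make $\sum_k\e_k^{\,n-2}$ small, which gives no control on $\sum_k\e_k^{\,n-3}$ (a set of vanishing $\H^{n-2}$ measure on an $(n-1)$-dimensional surface can have positive $2$-capacity). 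So your scheme is circular: the cut-off step needs exactly the $\H^{n-3}$-negligibility you are trying to prove. The paper breaks this circularity by proving the $L^2$ curvature bound (Lemma \ref{lemma potenziale stima II}) only under the hypothesis $\Phi\in\X_{**}(\lambda)$, then transferring it to a limiting integrand by approximating its minimizers with minimizers of $\PHI_h+\int g_h$ for $\Phi_h\in\X_{**}(\lambda)$, invoking lower semicontinuity of $|{\bf II}|^2$ (Lemma \ref{lemma sci II2}), and concluding via Lemma \ref{lemma II implica n-3}, whose proof uses \eqref{n-2 boundary} not in a cut-off but to exclude two hyperplanes crossing at a boundary point. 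The role you assign to \eqref{n-2 boundary} (making the exhaustion near $\S_{A\cap\pa H}(E)$ work) is therefore misplaced; without the open-and-closed continuity argument, the key estimate in your second paragraph remains unproved.
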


 We now describe the proof of Theorem \ref{thm main}. First of all, by a blow-up argument, Theorem \ref{thm main} is seen to be equivalent to the following theorem.

 \begin{theorem}
   \label{lemma main}
 If $\Phi\in\X_*(\lambda)$, $B=B_{0,1}$, and $E$ is a minimizer of $\PHI$ in $(B,H)$, then
 \begin{equation}\label{zurigo1}
 \H^{n-3}(\S_{B\cap \pa H}(E))=0\,.
 \end{equation}
 \end{theorem}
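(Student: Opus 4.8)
\emph{Strategy.} I would prove \eqref{zurigo1} by Federer's dimension-reduction scheme, following \cite{schoensimonalmgren} and incorporating at the boundary the $\H^{n-2}$-negligibility \eqref{n-2 boundary}. By Remark \ref{rmk: halfspace} we may keep $H$ fixed with $0\in\pa H$. Once Theorem \ref{lemma main} is proved, Theorem \ref{thm main} follows by the blow-up argument already mentioned, since at fine scales a $(\La,r_0)$-minimizer converges to a minimizer of the frozen autonomous integrand $\Phi(x_0,\cdot)\in\X_*(\lambda)$ and, by \eqref{singular set char}, boundary singular points are preserved.

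\emph{Step 1: compactness and reduction to a cone in $\R^3$.} I would first use, from the machinery of \cite{schoensimonalmgren} (interior) and \cite{dephilippismaggiCAPILLARI} (boundary), that minimizers of $\PHI$, $\Phi\in\X_*(\lambda)$, are compact --- a sequence of them subconverges in $L^1_\loc$, in the varifold sense on reduced boundaries, and in $C^1_\loc$ away from the singular sets, to a minimizer of $\PHI$ --- and that their blow-ups $(E-x_0)/\rho$ subconverge, along vanishing scales, to $\PHI$-minimizing cones ($\Phi$ being autonomous), so that $E$ has a tangent cone at every point of $M_B(E)$, lying in $H$ when $x_0\in\pa H$; by \eqref{reg exc}, small excess is preserved, hence limits of singular points are singular. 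Assume $\H^{n-3}(\S_{B\cap\pa H}(E))>0$ for contradiction. Picking a point of positive upper $\H^{n-3}$-density of $\S_{B\cap\pa H}(E)$, blowing up there, and iterating at boundary singular points of the successive cones, splitting off a line at each step (the splitting directions being parallel to $\pa H$, since a cone contained in $\cl(H)$ and translation-invariant along $\R v$ forces $v\parallel\pa H$, while the interior singularities created along the way are discarded via \eqref{n-3 interior}), the process terminates, by the usual dimension count, with a $\PHI$-minimizing cone $\C=\C_0\times V$ in $(\R^n,H)$, where $V\subset\pa H$ is an $(n-3)$-plane and $\C_0$ is a $\PHI$-minimizing cone in $(\R^3,H_0)$ --- with $\R^3\simeq V^\perp$ and $H_0=H\cap V^\perp$ a half-space of $\R^3$, $0\in\pa H_0$ --- whose singular set reduces to its vertex: $\S_{\R^3\cap\pa H_0}(\C_0)=\{0\}$. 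Here one uses that minimizing cones in half-spaces of $\R^2$ and $\R^1$ have no boundary singularity, by a direct argument, which is what makes $\R^3$ the terminal dimension. It remains to rule out such a $\C_0$.

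\emph{Step 2: the potential estimate for $\II$ and the contradiction.} Here the second variation enters (this is the content of Lemma \ref{lemma potenziale stima II}). A $\PHI$-minimizer is stable: the second variation of $\PHI$ along compactly supported normal deformations of its regular part that are tangent to $\pa H$ along $\pa H\cap M_B(E)$ is nonnegative; since $\pa H$ is a hyperplane and the regular part satisfies $\nabla\Phi(\nu_E)\cdot\nu_H=0$, the boundary contributions are controlled, leaving a stability inequality
\[
\int_{R_B(E)}\la\mathbf{A}_\Phi\,\nabla^M\varphi,\nabla^M\varphi\ra\,d\H^{n-1}\ge\frac1{C(\lambda)}\int_{R_B(E)}|\II|^2\,\varphi^2\,d\H^{n-1}\,,
\]
with $\mathbf{A}_\Phi$ positive definite by \eqref{elliptic}. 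Testing with a radial cutoff at scale $r$ times a cutoff vanishing near $\S_B(E)$ --- and using \eqref{n-2 boundary} to show that the contribution of the latter is negligible and may be dropped in the limit --- together with the density bound $\H^{n-1}(M_B(E)\cap B_{x,r})\le C\,r^{n-1}$, yields the potential estimate
\[
\int_{M_B(E)\cap B_{x,r}}|\II|^2\,d\H^{n-1}\le C(n,\lambda)\,r^{n-3}\qquad\text{for every }x\in M_B(E)\text{ and small }r,
\]
which passes to the tangent cones by lower semicontinuity. Applying it to the cone $\C=\C_0\times V$, with $\II_\C=\II_{\C_0}$ (the $V$-directions being flat), and integrating over $V$, one gets $\int_{\C_0\cap B_{0,\rho}}|\II_{\C_0}|^2\,d\H^2<\infty$ for $\rho>0$; but, $\C_0$ being a cone, $|\II_{\C_0}(x)|=|x|^{-1}|\II_{\C_0}(x/|x|)|$, so in polar coordinates $\int_{\C_0\cap B_{0,\rho}}|\II_{\C_0}|^2\,d\H^2=\big(\int_{\C_0\cap\pa B_{0,1}}|\II_{\C_0}|^2\,d\H^1\big)\int_0^\rho\frac{dt}{t}$, which is finite only if $\II_{\C_0}$ vanishes on the link $\C_0\cap\pa B_{0,1}$, i.e.\ only if $\C_0$ is flat --- hence a half-plane, so that $0\in R_{\R^3}(\C_0)$, contradicting $0\in\S_{\R^3\cap\pa H_0}(\C_0)$. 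This proves \eqref{zurigo1}; the passage from the potential estimate to $\H^{n-3}$-negligibility is the content of Lemma \ref{lemma II implica n-3}.

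\emph{Main obstacle.} The delicate point is Step 2: deriving the stability inequality with genuinely controlled boundary terms in the anisotropic free-boundary setting --- the flatness of $\pa H$ together with $\nabla\Phi(\nu_E)\cdot\nu_H=0$ is what makes this possible --- and, above all, justifying the removal of the cutoff around $\S_B(E)$ by means of the $\H^{n-2}$-negligibility \eqref{n-2 boundary} alone; this is the precise sense in which one ``combines the ideas of \cite{schoensimonalmgren} with the $\H^{n-2}$-negligibility of \cite{dephilippismaggiCAPILLARI}''. The compactness and blow-up statements of Step 1 are themselves delicate in the anisotropic case (there is no monotonicity formula for $\PHI$), but are available from \cite{dephilippismaggiCAPILLARI} and \cite{schoensimonalmgren}; the conical computations are routine once the right estimate is at hand.
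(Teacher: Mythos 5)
Your plan --- Federer dimension reduction down to a singular minimizing cone in $\R^3$, which is then excluded by an $L^2$ curvature estimate --- is not the route the paper takes, and as written it has two genuine gaps. First, Step 1 presupposes that blow-ups of minimizers (and of the successive limits) are cones, that singular points away from the vertex allow one to split off lines, and that the reduction terminates with $\C=\C_0\times V$. For anisotropic integrands $\Phi\in\X_*(\lambda)$ there is no monotonicity formula, hence no proof that blow-up limits are conical, no monotone density, and none of the dimension-reduction machinery; you acknowledge the missing monotonicity, but the conicality statements you invoke are not available from \cite{schoensimonalmgren} or \cite{dephilippismaggiCAPILLARI} --- indeed this absence is precisely why \cite{schoensimonalmgren}, and this paper, avoid dimension reduction altogether. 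Without cones, the splitting of directions parallel to $\pa H$, the reduction to a two-dimensional cone $\C_0$, and the polar-coordinate divergence of $\int_0^\rho dt/t$ have nothing to act on.

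Second, in Step 2 you propose to remove the cutoff around $\S_B(E)$ using the $\H^{n-2}$-negligibility \eqref{n-2 boundary} alone. This fails by one order of magnitude: cutting off around a set covered by balls of radii $\e_k$ costs $\int_{R_B(E)}|\nabla\psi|^2\,d\H^{n-1}\le C\sum_k\e_k^{n-3}$ (by the upper density bound $P(E;B_{x,r})\le C\,r^{n-1}$), so one needs $\H^{n-3}$-negligibility (or at least local $\H^{n-3}$-finiteness) of the singular set, and $\H^{n-2}(\Sigma)=0$ does not imply this. Since $\H^{n-3}$-negligibility of the boundary singular set is exactly what is being proved, your Step 2 is circular as it stands; this is not a technical obstacle but the central difficulty. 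The paper breaks the circle by a continuity method in the space of integrands: $\X_{**}(\lambda)$ is open in $\X_*(\lambda)$ (upper semicontinuity of $\H^s_\infty$ of singular sets under convergence of minimizers, Lemma \ref{lemma compactness}), closed (a minimizer of $\PHI$ is approximated by minimizers of $\PHI_h+\int g_h$ with $\Phi_h\in\X_{**}(\lambda)$, for which the curvature estimate of Lemma \ref{lemma potenziale stima II} is legitimately available since their singular sets are already $\H^{n-3}$-null; one then passes to the limit by lower semicontinuity of $|{\bf II}|^2$, Lemma \ref{lemma sci II2}, and applies Lemma \ref{lemma II implica n-3}), and nonempty because the area integrand lies in it by \cite{gruter2}; connectedness of the convex set $\X_*(\lambda)$ concludes. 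In that scheme \eqref{n-2 boundary} plays a much more limited role than you assign to it: it is used in Lemma \ref{lemma delta} only to exclude that two of the finitely many hyperplanes constituting a minimizer with $|{\bf II}_E|^2(B)=0$ cross at a boundary point. To salvage your outline you would need either a conicality/monotonicity theory for anisotropic free-boundary minimizers (not known) or a direct proof of the curvature estimate assuming only \eqref{n-2 boundary}, which the paper circumvents rather than establishes.
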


We deduce Theorem \ref{lemma main} from the following two propositions, where we set
 \[
 \X_{**}(\lambda)=\Big\{\Phi\in\X_*(\lambda):\mbox{such that \eqref{zurigo1} holds true for every  $E$ is a minimizer of $\PHI$ in $(B,H)$}\Big\}\,.
 \]

 \begin{proposition}
  \label{lemma apertura}
  The set $\X_{**}(\lambda)$ is open in \(\X_*(\lambda)\) in the uniform convergence on \(\mathbf S^{n-1}\).
  \end{proposition}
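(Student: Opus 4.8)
The plan is to argue by contradiction, exploiting the compactness of $\X_*(\lambda)$ together with the characterization of the singular set via the spherical excess in \eqref{singular set char}. Suppose $\X_{**}(\lambda)$ is not open: then there is a $\Phi\in\X_{**}(\lambda)$ and a sequence $\Phi_j\in\X_*(\lambda)\setminus\X_{**}(\lambda)$ with $\Phi_j\to\Phi$ uniformly on $\mathbf S^{n-1}$. For each $j$, since $\Phi_j\notin\X_{**}(\lambda)$, there is a minimizer $E_j$ of $\PHI_j$ in $(B,H)$ with $\H^{n-3}(\S_{B\cap\pa H}(E_j))>0$. The idea is to pass to a limit minimizer $E$ of $\PHI$ in $(B,H)$ (possibly on a slightly smaller ball, after a standard diagonalization over an exhaustion), and to show that the singular sets $\S_{B\cap\pa H}(E_j)$ converge, in a suitable sense, to a subset of $\S_{B\cap\pa H}(E)$ that still carries positive $\H^{n-3}$-measure — contradicting $\Phi\in\X_{**}(\lambda)$.

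Concretely, the first step is the compactness of minimizers: the uniform density estimates and perimeter bounds available for $(\La,r_0)$-minimizers (here $(0,\infty)$-minimizers) of integrands in $\X_*(\lambda)$ give, up to a subsequence, $E_j\to E$ in $L^1_{\rm loc}(B\cap H)$ with $E$ a minimizer of $\PHI$ in $(B,H)$, together with Hausdorff convergence of the (topological) boundaries $M_{B}(E_j)\to M_{B}(E)$ on compact subsets; this is the standard closure theorem adapted to the free-boundary setting, as in \cite{dephilippismaggiCAPILLARI}. The second step is an upper-semicontinuity statement for singular sets: if $x_j\in\S_{B\cap\pa H}(E_j)$ and $x_j\to x$, then $x\in\S_{B\cap\pa H}(E)$. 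This follows from \eqref{reg exc}: were $x$ a regular point, one would have ${\bf exc}^H(E,x,r)<\e(n,\lambda)/2$ for some small $r$, and then — using the $L^1$ convergence $E_j\to E$, the convergence $\Phi_j\to\Phi$, and the continuity of the excess functional under these convergences — one would get ${\bf exc}^H(E_j,x_j,r/2)<\e(n,\lambda)$ for $j$ large, forcing $x_j\in R_B(E_j)$ by \eqref{reg exc}, a contradiction. Hence $\limsup_j \S_{B\cap\pa H}(E_j)\subset\S_{B\cap\pa H}(E)$ in the Kuratowski sense.

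The third and most delicate step is to convert a \emph{bound} on the Hausdorff measure of the limit set into information forbidding positive $\H^{n-3}$-measure of the approximating sets. Mere Kuratowski upper-semicontinuity of closed sets does not preserve lower bounds on Hausdorff measure — indeed measure can drop in the limit — so one cannot directly conclude that $\H^{n-3}(\S_{B\cap\pa H}(E))>0$. The remedy is to pass through Hausdorff measure \emph{densities} rather than the measures themselves: one should show that at a point of positive $(n-3)$-density for $\S_{B\cap\pa H}(E_j)$, after blowing up and using the scale-invariance of minimizers of autonomous integrands in $\X_*(\lambda)$ (Remark \ref{rmk: halfspace}), one produces a tangent minimizer whose singular set is itself $(n-3)$-dimensional in a quantitative sense. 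Combined with the excess characterization \eqref{singular set char} and a covering argument, this yields that, if infinitely many $E_j$ had singular sets of positive $\H^{n-3}$-measure, then $E$ would too; more precisely, one extracts from the $E_j$ a uniform lower excess bound on a set of uniformly positive $(n-3)$-content, which passes to the limit by the lower-semicontinuity of excess and the Hausdorff convergence of the boundaries, producing a positive-$\H^{n-3}$ subset of $\S_{B\cap\pa H}(E)$.

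The main obstacle is precisely this last point: making the passage from $E_j$ to $E$ respect a \emph{lower} bound on $(n-3)$-dimensional Hausdorff measure. I expect the clean way to handle it is to phrase $\X_{**}(\lambda)$ not through the qualitative statement \eqref{zurigo1} but through an equivalent quantitative one — e.g. a uniform estimate of the form ``for every minimizer $E$ and every compact $K\cc B$, a certain $(n-3)$-dimensional Hausdorff pre-measure of $\S_K(E)$ is bounded by a modulus depending only on $n,\lambda$'' — and then to prove that this quantitative property is stable under uniform convergence of the integrand, using the two compactness steps above. Showing the equivalence of the qualitative and quantitative formulations is itself a compactness argument of the same flavour, so the proof is naturally organized as: (i) compactness of minimizers; (ii) upper-semicontinuity of singular sets via \eqref{reg exc}; (iii) a contradiction/compactness argument upgrading \eqref{zurigo1} to a uniform quantitative bound and transferring it along $\Phi_j\to\Phi$.
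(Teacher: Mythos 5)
Your steps (i)--(ii) --- compactness of minimizers under $\Phi_j\to\Phi$ and the one-sided convergence of singular sets via the excess criterion \eqref{reg exc} --- coincide with Lemma \ref{lemma compactness}, and you correctly isolate the real difficulty: lower bounds on $\H^{n-3}$ do not pass to Kuratowski limits. The gap is in how you close step (iii). Your ``more precisely'' sentence asserts that one can extract from the \emph{unrescaled} $E_j$ ``a uniform lower excess bound on a set of uniformly positive $(n-3)$-content'' and thereby produce a positive-$\H^{n-3}$ subset of $\S_{B\cap\pa H}(E)$. Uniform positivity of the content of $\S_{B\cap\pa H}(E_j)$ is exactly what is \emph{not} available: $\H^{n-3}(\S_{B\cap\pa H}(E_j))>0$ only gives $\H^{n-3}_\infty(\S_{B\cap\pa H}(E_j))>0$, the contents may tend to $0$, and then the limit $E$ can be perfectly regular --- no contradiction can be reached at $E$, and none is needed. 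The blow-up you invoke in the first half of that paragraph changes the limit object: rescaling $E_j$ at points $x_j\in\S_{B\cap\pa H}(E_j)$ and scales $r_j\to0$ produces sets $(E_j)^{x_j,r_j}$ which (by Lemma \ref{lemma compactness}, using autonomy so that the integrand and the half-space are scale invariant, and $\Phi_j\to\Phi$) converge to \emph{some} minimizer $F$ of $\PHI$ in $(\R^n,H)$, in general unrelated to $E$; the contradiction with $\Phi\in\X_{**}(\lambda)$ must be reached at $F$. This is how the paper argues: by the density property \eqref{dens} one chooses $x_j,r_j$ with $\H^{n-3}_\infty(\S_{B\cap\pa H}(E_j)\cap B_{x_j,r_j})\ge c(n)\,r_j^{n-3}$, so that after rescaling the contents are uniformly bounded below by $c(n)$; the decisive elementary fact is that the pre-measure $\H^{n-3}_\infty$ (unlike $\H^{n-3}$) is upper semicontinuous along the inclusion \eqref{half haus}, i.e.\ \eqref{usc Hinfinito}, since an open cover of the limit singular set covers the approximating ones for large $j$; and \eqref{zerossezero} converts positive content of $\S_{B\cap\pa H}(F)$ back into $\H^{n-3}(\S_{B\cap\pa H}(F))>0$, contradicting $\Phi\in\X_{**}(\lambda)$.

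Your fallback in the last paragraph --- re-characterizing $\X_{**}(\lambda)$ by a uniform quantitative estimate and proving its stability under uniform convergence of the integrand --- is not carried out, and as formulated it does not encode the property to be transferred: bounding an $(n-3)$-dimensional pre-measure of $\S_K(E)$ by a constant depending only on $n,\lambda$ is compatible with $\H^{n-3}(\S_K(E))>0$, so it cannot substitute for \eqref{zurigo1}; the equivalence and stability you defer to are essentially the content of the proposition itself. No reformulation is needed: the density-point blow-up plus upper semicontinuity of $\H^{n-3}_\infty$ closes the openness argument directly. A genuinely quantitative surrogate for \eqref{zurigo1} (uniform $L^2$ bounds on the second fundamental form) is indeed used in the paper, but for the closedness statement, Proposition \ref{lemma chiusura}, not here.
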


  \begin{proposition}\label{lemma chiusura}
  The set $\X_{**}(\lambda)$ is closed \(\X_*(\lambda)\)  in the uniform convergence on \(\mathbf S^{n-1}\).
  \end{proposition}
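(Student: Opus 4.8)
\noindent The plan is to argue by contradiction, combining the dimension‑reduction technique of \cite{schoensimonalmgren} with the compactness of minimizers under uniform convergence of the integrand and with the a priori codimension‑two estimate \eqref{n-2 boundary}, which holds for \emph{every} integrand in $\X_*(\lambda)$ and is the feature that survives the limit. Assume, contrary to the statement, that $\Phi_k\in\X_{**}(\lambda)$ with $\Phi_k\to\Phi$ uniformly on $\mathbf S^{n-1}$, but $\Phi\notin\X_{**}(\lambda)$; then some minimizer $E$ of $\PHI$ in $(B,H)$ satisfies $\H^{n-3}(\S_{B\cap\partial H}(E))>0$.

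\smallskip
\noindent\emph{Step 1 (a singular cone for $\Phi$).} By the density theorem for Hausdorff measures I would pick $x_0\in\S_{B\cap\partial H}(E)$ (hence $x_0\in\partial H$) at which $\S_{B\cap\partial H}(E)$ has positive upper $\H^{n-3}$-density, and then blow $E$ up at $x_0$: by the compactness and blow‑up results for minimizers of autonomous integrands from \cite{dephilippismaggiCAPILLARI} (developed along the lines of \cite{schoensimonalmgren}), a subsequence of the rescalings converges in $L^1_{\loc}(\R^n)$ to a minimizer $V$ of $\PHI$ in $(\R^n,H)$ which is a cone with vertex $0\in\partial H$. By the excess characterization of the singular set \eqref{reg exc} and the continuity of the spherical excess along the blow‑up, the vertex stays singular and $\S_{\R^n\cap\partial H}(V)$ retains positive $\H^{n-3}$-density at $0$; iterating at singular points of $V$ and using the Federer--Almgren cone‑splitting argument, I would reduce to the case in which $V$ is invariant under an $(n-3)$-dimensional subspace $L\subset\partial H$ of translations and is singular exactly along $L$ --- equivalently $V=V_0\times\R^{n-3}$ with $V_0$ a minimizer of the induced integrand in the three‑dimensional slice, a cone with an isolated boundary singularity at the origin. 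The interior alternative is ruled out by \eqref{n-3 interior}, so in particular $\H^{n-3}(\S_{B\cap\partial H}(V))>0$.

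\smallskip
\noindent\emph{Step 2 (the contradiction, and the main obstacle).} The difficulty I expect to be decisive is that the singular set is \emph{not} stable from below under $L^1_{\loc}$-convergence of sets, nor under convergence of the integrand: singularities can be smoothed away, so one cannot hope to exhibit a minimizer of $\PHI_k$ with non‑$\H^{n-3}$-negligible singular set for large $k$. What can be propagated to the limit is instead a scale‑invariant \emph{quantitative} consequence of regularity. Using that \eqref{n-2 boundary} holds for every integrand in $\X_*(\lambda)$ and that each $\Phi_k\in\X_{**}(\lambda)$, together with Lemma \ref{lemma II implica n-3} applied along the sequence, I would extract for minimizers of $\PHI_k$ a control on the second fundamental form of the regular part that is uniform in $k$ and invariant under rescaling. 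The heart of the proof --- and the step I expect to be hardest --- is then to perform the blow‑up of Step 1 simultaneously along a well‑chosen subsequence of the $\Phi_k$ (legitimate because \eqref{reg exc} is uniform over $\X_*(\lambda)$ and $\X_*(\lambda)$ is compact for uniform convergence), so as to realize $V$ --- or, at worst, another minimizer of $\PHI$ carrying the same $L$-invariance and the same size of singular set --- as an $L^1_{\loc}$-limit of rescaled minimizers of the $\PHI_k$. Granting this, the uniform curvature control passes to $V$; applying Lemma \ref{lemma II implica n-3} to $V$ itself (whose integrand $\Phi$ still obeys \eqref{n-2 boundary}) then gives $\H^{n-3}(\S_{B\cap\partial H}(V))=0$, contradicting Step 1. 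Therefore $\Phi\in\X_{**}(\lambda)$, which proves that $\X_{**}(\lambda)$ is closed in $\X_*(\lambda)$; this, together with Proposition \ref{lemma apertura} and the convexity (hence connectedness) of $\X_*(\lambda)$, is what the proof of Theorem \ref{lemma main} ultimately rests on.
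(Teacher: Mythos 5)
Your overall strategy is the right one in spirit---you correctly identify that singular sets cannot be passed to the limit and that what must be propagated is a quantitative, scale-invariant consequence of regularity, which is exactly the paper's mechanism (negligible singular set $\Rightarrow$ uniform $L^2$ curvature bound, lower semicontinuity of $|{\bf II}_E|^2$, then Lemma \ref{lemma II implica n-3})---but the two steps on which everything hinges are left unproved, and as stated they cannot be filled in the way you indicate. First, the uniform-in-$k$ control of the second fundamental form for minimizers of $\PHI_k$ does not follow from Lemma \ref{lemma II implica n-3}, which goes in the opposite direction (finite curvature measure $\Rightarrow$ small singular set), nor from \eqref{n-2 boundary}. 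The implication you actually need, namely $\Phi_k\in\X_{**}(\lambda)$ $\Rightarrow$ $|{\bf II}_{E_k}|^2(B_{x,r})\le C(n,\lambda)\,r^{n-3}$, is the content of Lemma \ref{lemma potenziale stima II}, and it is obtained from the second variation (stability) inequality \eqref{second variation inequality} combined with a cutoff/capacity argument in which the $\H^{n-3}$-negligibility of the singular set is used to remove the restriction $\spt\zeta\cap\S_A(E)=\emptyset$ on test functions. No trace of this stability argument appears in your proposal, and without it the ``uniform curvature control'' is an assertion, not a proof. Second, the step you explicitly defer (``granting this'') is the other genuine difficulty: if you simply take minimizers of $\PHI_k$ on rescaled domains, compactness gives convergence to \emph{some} minimizer of $\PHI$, with no reason for the limit to be the given $E$ or your blow-up $V$; a smooth competitor could perfectly well appear. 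The paper resolves this by a selection device: it minimizes $\PHI_k(F;H\cap B)+\int_F g_k$ with $F\Delta E\subset B_s$, where $g_k$ is a mollified signed distance to $E$; the bulk term forces the approximating minimizers $E_k$ to converge to $E$ itself, and this is precisely why the whole second-variation machinery (Lemma \ref{lemma variazione seconda}, Lemma \ref{lemma potenziale stima II}, Lemma \ref{lemma sci II2}) is developed for $\PHI+\int g$ rather than for $\PHI$ alone. Your argument contains no substitute for this device.

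There is also a problem with your Step 1: for anisotropic integrands there is no monotonicity formula, so blow-up limits of minimizers need not be cones, and the Federer--Almgren cone-splitting/dimension-reduction scheme you invoke (reduction to a cone invariant under an $(n-3)$-plane $L$ with $\S=L$) is not available in this setting; this is exactly why \cite{schoensimonalmgren}, and the present paper at the boundary, work through $L^2$ estimates on ${\rm II}_E$ rather than through tangent cones. Fortunately this step is also unnecessary: in the paper's proof one never blows up the limiting minimizer at all; one fixes $E$, approximates it by the penalized minimizers $E_k$, transfers the curvature bound by Lemma \ref{lemma sci II2}, and concludes with Lemma \ref{lemma II implica n-3} (whose proof, via Lemma \ref{lemma delta}, is where \eqref{n-2 boundary} and \eqref{n-3 interior} enter). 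So the correct skeleton is present in your Step 2, but the two ingredients that make it work---the stability inequality yielding the uniform curvature estimate, and the signed-distance penalization forcing convergence to the prescribed minimizer---are missing, and these are the heart of the proof rather than technical details.
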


  \begin{proof}
    [Proof of Theorem \ref{lemma main}] Obviously, \(\X_*(\lambda)\) is convex, thus connected. By \cite{gruter2} (or, alternatively, by \cite[Corollary 1.4]{dephilippismaggiCAPILLARI}) the isotropic functional \(\Phi(\nu)=|\nu|\) belongs to \(\X_{**}(\lambda)\) for all \(\lambda\ge 1\). Propositions \ref{lemma apertura} and \ref{lemma chiusura} thus imply \(\X_{**}(\lambda)=\X_*(\lambda)\).
  \end{proof}

  In section \ref{section proofs} we prove Propositions \ref{lemma apertura} and  \ref{lemma chiusura} and show that Theorem \ref{lemma main} implies Theorem \ref{thm main}. Second variation formulas used in these arguments are collected in appendix.

  We close this introduction by describing the main ideas behind the two key propositions. Proposition  \ref{lemma apertura} is based on the idea that, roughly speaking, for every \(s>0\) the map
\[
\Phi\mapsto \sup \Big\{ \H^{s}(\S_{B\cap\pa H}(E)):  \textrm{ \(E\) is a  minimizer of \(\PHI\) in \((B,H)\) }\Big\}
\]
is upper semi-continuous on $\X_*(\lambda)$ with respect to the uniform convergence on \(\mathbf S^{n-1}\). Concerning Proposition  \ref{lemma chiusura}, one starts by observing that, if $\Phi\in\X_*(\lambda)$, then $R_A(E)$ is a $C^{2}$-manifold with boundary.  Denoting by ${\rm II}_E$ the second fundamental form of $R_A(E)$, we set
  \begin{equation}
    \label{II E misura}
      |{\bf II}_E|^2(G)=\int_{G\cap R_E(A)}|{\rm II}_E|^2\,d\H^{n-1}\in[0,\infty]\,,\qquad\forall G\subset \R^n\,,
  \end{equation}
  where $|{\rm II}_E|^2$ is the squared Hilbert-Schmidt norm of the tensor ${\rm II}_E$, which equals the sum of the squared principal curvatures of $R_A(E)$. One then shows that $\Phi\in\X_{**}(\lambda)$ if and only if
  \[
  |{\bf II}_E|^2(B)\le C\qquad\textrm{ for every minimizer $E$ of $\PHI$ in $(B,H)$}\,,
  \]
  for some $C=C(n,\lambda)$, and hence concludes by proving that the map
  \[
  \Phi\mapsto \sup \Big\{   |{\bf II}_E|^2(B):  \textrm{  \(E\) is a  minimizers of \(\PHI\) in \((B,H)\)  }\Big\}
  \]
  is lower-semicontinuous on $\X_*(\lambda)$ with respect to the uniform convergence on $\mathbf S^{n-1}$.

  \medskip

  \noindent {\bf Acknowledgement}: FM was supported by the NSF Grant DMS-1265910.

  \section{Proofs}\label{section proofs}  Here and in the following we say that $E_h\to E$ in $A$ as $h\to\infty$ if $|(E_h\Delta E)\cap A|\to 0$ as $h\to\infty$, and that $E_h\to E$ locally in $A$ as $h\to\infty$ if, for every $K\cc A$, we have $E_h\to E$ in $K$ as $h\to\infty$. Moreover, we set set $I_\e(S)$ for the $\e$-neighborhood of $S\subset\R^n$.
%
We begin with a classical  lemma concerning convergence of minimizers and of singular sets, see for instance \cite[Lemma 28.14]{maggiBOOK}
  \begin{lemma}
    \label{lemma compactness}
    Let $\{\Phi_h\}_{h\in\N}\subset\X_*(\lambda)$ with $\Phi_h\to\Phi$ in $C^{0}(\mathbf S^{n-1})$ as $h\to\infty$, and let $\{E_h\}_{h\in\N}$ be such that $E_h$ is a $(\Lambda_h,r_h)$-minimizer of $\PHI_h$ in $(A,H)$ with $\Lambda_h\to\Lambda<\infty$ and $r_h\to r_0>0$ as $h\to\infty$. Then there exists a $(\Lambda,r_0)$-minimizer $E$ of $\PHI$ in $(A,H)$ such that, up to subsequences, $E_h\to E$ locally in $A$ as $h\to\infty$. Moreover, for every $\e>0$ and $K\cc A$ there exists $h_0>0$ such that
     \begin{equation}
      \label{half haus}
      \S_{K}(E_h)\subset I_\e(\S_{K}(E))\,,\qquad\forall  h\ge h_0\,.
    \end{equation}
    In particular,
    \begin{equation}
      \label{usc Hinfinito}
          \H^s_\infty(\S_{ K}(E))\ge\limsup_{h\to\infty}\H^s_\infty(\S_{ K}(E_h))\,,\qquad\forall s\in  [0,n]\,,
    \end{equation}
    where  \(\H^{s}_\infty\) is defined for every \(G\subset \R^n\)  as
  \[
  \H^{s}_\infty(G)=\inf \Big\{ \sum_{i\in \N } \om_s \Big(\frac{\diam(G_i)}2\Big)^s:  G\subset \bigcup_{i\in \N}G_i\,,\textrm{ \(G_i\) open}\Big\}\quad\mbox{with}\quad \om_s=\frac{\pi^{s/2}}{\int_0^\infty t^{s/2}\,e^{-t}\,dt}\,.
  \]
  \end{lemma}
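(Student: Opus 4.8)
The plan is to prove Lemma \ref{lemma compactness} in three stages: first the local compactness of $(\Lambda_h,r_h)$-minimizers, then the Hausdorff-type inclusion \eqref{half haus} for the singular sets, and finally the upper semicontinuity \eqref{usc Hinfinito} as a formal consequence.

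\textbf{Step 1: local convergence of minimizers.} First I would obtain uniform local perimeter bounds for the $E_h$. Testing the $(\Lambda_h,r_h)$-minimality of $E_h$ against the competitor $F=E_h\setminus B_{x,r}$ (for $\overline{B_{x,r}}\cc A$, $r<r_0$) and using the lower/upper bounds $\frac1\lambda\le\Phi_h\le\lambda$ one gets $\PHI_h(E_h;H\cap B_{x,r})\le \lambda\,\H^{n-1}(H\cap\pa B_{x,r})+\Lambda_h|E_h\cap B_{x,r}|$, hence $\sup_h P(E_h;H\cap K)<\infty$ for every $K\cc A$; this is the standard density-estimate computation, so I would only sketch it. By compactness of sets of finite perimeter, up to a subsequence $E_h\to E$ locally in $A$ for some set $E$ of locally finite perimeter in $A$, with $E\subset H$ since each $E_h\subset H$. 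That the limit $E$ is itself a $(\Lambda,r_0)$-minimizer of $\PHI$ in $(A,H)$ follows by the usual lower-semicontinuity plus competitor-construction argument: given a competitor $F$ for $E$ with $E\Delta F\cc W\cc A$, $\diam(W)<2r_0$, one interpolates between $F$ and $E_h$ in a thin spherical shell to build competitors $F_h$ for $E_h$, uses $\Phi_h\to\Phi$ uniformly together with the $C^{2,1}$-bounds to pass $\PHI_h(F_h;\cdot)\to\PHI(F;\cdot)$, and uses lower semicontinuity of $\PHI$ under local $L^1$-convergence (the integrand being continuous and convex in $\nu$) to get $\PHI(E;H\cap W)\le\liminf_h\PHI_h(E_h;H\cap W)$. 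This is \cite[Lemma 28.14]{maggiBOOK} adapted to the free-boundary setting of \cite{dephilippismaggiCAPILLARI}, so I would cite rather than reproduce the details.

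\textbf{Step 2: the inclusion $\S_K(E_h)\subset I_\e(\S_K(E))$.} This is the heart of the lemma and the step I expect to require the most care. Fix $\e>0$ and $K\cc A$, and argue by contradiction: suppose along a further subsequence there exist $x_h\in\S_K(E_h)$ with $\dist(x_h,\S_K(E))\ge\e$. Passing to a subsequence, $x_h\to x\in K$ with $\dist(x,\S_K(E))\ge\e$, so in particular $x\in R_A(E)\cup(A\setminus M_A(E))$. I would split into two cases. If $x\notin M_A(E)=A\cap\cl(H\cap\pa E)$, then $E$ (or its complement in $H$) has full density at $x$; by the local convergence $E_h\to E$ and the uniform density estimates for almost-minimizers, for $h$ large $x_h\notin M_A(E_h)$, contradicting $x_h\in\S_K(E_h)\subset M_A(E_h)$. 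If instead $x\in R_A(E)$, then by the excess characterization \eqref{reg exc}–\eqref{singular set char} there is $r>0$ with $\overline{B_{x,2r}}\cc A$ and ${\bf exc}^H(E,x,r)<\e(n,\lambda)/2$; since the excess is continuous under local $L^1$-convergence of almost-minimizers (the reduced boundaries converge in the appropriate sense, cf. the argument behind \eqref{reg exc}), we get ${\bf exc}^H(E_h,x_h,r/2)<\e(n,\lambda)$ for $h$ large, whence by \eqref{reg exc} $M_A(E_h)\cap B_{x_h,cr/2}\subset R_A(E_h)$, again contradicting $x_h\in\S_K(E_h)$. The main obstacle here is making the convergence of the spherical excess rigorous at moving centers $x_h\to x$ and with varying integrands $\Phi_h\to\Phi$; this rests on the convergence of $\H^{n-1}\llcorner(H\cap\pa^*E_h)$ to $\H^{n-1}\llcorner(H\cap\pa^*E)$ as Radon measures together with uniform upper perimeter bounds, which is exactly what the regularity theory in \cite{dephilippismaggiCAPILLARI} provides for almost-minimizers.

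\textbf{Step 3: upper semicontinuity of $\H^s_\infty$.} Finally, \eqref{usc Hinfinito} follows formally from \eqref{half haus}: $\H^s_\infty$ is monotone, and since $\S_K(E)$ is relatively closed in $A$ and contained in the fixed compact $K$, one has $\H^s_\infty(I_\e(\S_K(E)))\to\H^s_\infty(\S_K(E))$ as $\e\to0^+$ (covering $I_\e(\S_K(E))$ by the open $\e$-enlargements of a near-optimal cover of $\S_K(E)$). Combining, for any fixed $\e$, $\limsup_h\H^s_\infty(\S_K(E_h))\le\H^s_\infty(I_\e(\S_K(E)))$, and letting $\e\to0^+$ gives \eqref{usc Hinfinito}. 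I would present Step 3 in a line or two since it is purely measure-theoretic and standard.
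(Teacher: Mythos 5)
Your proposal is correct and follows essentially the same route as the paper: local convergence to a $(\Lambda,r_0)$-minimizer via the compactness theory of \cite{dephilippismaggiCAPILLARI}, the inclusion \eqref{half haus} from convergence of the spherical excess combined with \eqref{reg exc} and \eqref{singular set char}, and \eqref{usc Hinfinito} by the standard covering argument. The only point to tighten is the parenthetical in your Step 3: enlarging every set of a possibly infinite near-optimal cover need not keep the sum $\sum_i\om_s(\diam(G_i)/2)^s$ under control, so instead use compactness of $\S_K(E)$ to note that the open set $\bigcup_i G_i$ already contains $I_\e(\S_K(E))$ for some $\e>0$ (or first pass to a finite subcover), which is exactly how the paper argues.
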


  \begin{proof}
The local convergence in $A$ to a minimizer $E$ of $\PHI$ follows by \cite[Theorem 2.9]{dephilippismaggiCAPILLARI}. Since $\exc^H(E_h,x,r)\to\exc^H(E,x,r)$ for a.e. $r>0$ and for every $x\in A$ (cf. with \cite[Equation (3.10)]{dephilippismaggiCAPILLARI}) and by \eqref{reg exc} and \eqref{singular set char}, one proves \eqref{half haus}. Finally, if $\{G_i\}_{i\in \N}$ is an open covering of $\S_{ K}(E)$, then there exists $\e>0$ such that $\{G_i\}_{i\in \N}$ is a covering of $I_\e(\S_{K}(E))$, and thus of $\S_{K}(E_h)$ too, provided $h\ge h_0$: by minimizing on all the open coverings  we obtain \eqref{usc Hinfinito}.
  \end{proof}

  We now prove Proposition  \ref{lemma apertura} by using Lemma \ref{lemma compactness}. To this end we recall some properties of  \(\H^{s}_\infty\). First of all, \( \mathcal H_\infty^s\ge \mathcal H^s\), with
  \begin{equation}\label{zerossezero}
  \mathcal H^s(G)=0 \quad \textrm{if and only if}\quad  \mathcal H_\infty^s(G)=0\,.
  \end{equation}
  Moreover, for every \(G\subset \R^n\)  and  \(s\in [0,n]\) we have
  \begin{equation}\label{dens}
 \limsup_{r\to 0} \frac{\H_\infty ^{s}(G\cap B_{x,r})}{r^s}\ge c(s)>0\qquad \textrm{for \(\H^{s}\)-a.e.  \(x\in G\)\,,}
  \end{equation}
   see \cite[Theorem 3.26 (2)]{SimonLN}. We now set
    \[
  E^{x,r}=\frac{E-x}{r}\,,\qquad\forall x\in\R^n\,,r>0\,,
  \]
  and we notice that,  if $\Phi\in\X(A,\lambda,\ell)$ and $E$ is a $(\Lambda,r_0)$-minimizer of $\PHI$ in $(A,H)$, then $E^{x,r}$ is a $(\La\,r,r_0/r)$-minimizer of $\PHI^{x,r}$ in $(A^{x,r},H^{x,r})$, where
  \[
  \Phi^{x,r}(y,\nu)=\Phi(x+r\,y,\nu)\,,\qquad\forall y\in A^{x,r}\,, \nu\in \mathbf{S}^{n-1}\,.
  \]
  We shall also  frequently use the facts that if $x\in A\cap\pa H$ and \(0\in \partial H\) (see Remark \ref{rmk: halfspace}), then $H^{x,r}=H$ for every $r>0$ and $A^{x,r}$ eventually contains every compact set of $\R^n$ as $r\to 0$; and that if $\Phi\in\X_*(\lambda)$, then $\Phi^{x,r}=\Phi$.

  \begin{proof}[Proof of Proposition  \ref{lemma apertura}]
  Let $\Phi\in\X_{**}(\lambda)$ and assume there exists $\{\Phi_h\}_{h\in\N}\subset\X_*(\lambda)\setminus\X_{**}(\lambda)$ such that $\Phi_h\to\Phi$ in $C^{0}(\mathbf S^{n-1})$ as $h\to\infty$. In particular, for every $h\in\N$ there exists a minimizer $E_h$ of $\PHI_h$ in $(B,H)$ such that $\H^{n-3}(\S_{B\cap\pa H}(E_h))>0$. By \eqref{dens}  there exist $x_h\in \S_{B\cap\pa H}(E_h) $ and $r_h\to 0$ with
  \begin{equation}\label{limite}
  \frac{r_h} {\dist(x_h,\pa B)}\to 0\qquad \textrm{ as $h\to\infty$,}
  \end{equation}
  such that
  \[
  \H^{n-3}_\infty(\S_{B\cap\pa H}(E_h)\cap B_{x_h,r_h})\ge c(n)\,r_h^{n-3}\,.
  \]
  Let us set $F_h=(E_h)^{x_h,r_h}$. Then $F_h$ is a minimizer of $\PHI_h$ in $(B^{x_h,r_h},H)$ and
  \[
    \H^{n-3}_\infty(\S_{B\cap \pa H}(F_h))= \frac{\H_\infty ^{n-3}(\S_{B\cap\pa H}(E_h)\cap B_{x_h,r_h})}{r_h^{n-3}}\ge c(n)>0\,.
  \]
  By Lemma \ref{lemma compactness}, there exist a minimizer $F$ of $\PHI$ in $(\R^n,H)$ (since \(B^{x_h,r_h}\to \R^n\) by \eqref{limite}) such that $\H^{n-3}_\infty(\S_{B\cap\pa H}(F))>0$. By \eqref{zerossezero}, this contradicts the fact that $\Phi\in\X_{**}(\lambda)$.
  \end{proof}

  The same argument gives the following lemma.

  \begin{lemma}\label{lemma X figo}
  If $A$ is an open set, $\Phi\in\X_{**}(\lambda)$ and $E$ is a $(\Lambda,r_0)$-minimizer of $\PHI$ in $(A,H)$, then $\H^{n-3}(\S_{A\cap\pa H}(E))=0$.
  \end{lemma}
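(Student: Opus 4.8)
The plan is to mimic exactly the blow-up/scaling argument used in the proof of Proposition \ref{lemma apertura}, now carried out locally at a point of the singular set rather than for a globally defined minimizer on the unit ball. The point is that $\Phi$ belongs to $\X_{**}(\lambda)$, a property about minimizers on $(B,H)$ with $\Phi$ \emph{autonomous}; a general $(\Lambda,r_0)$-minimizer of $\PHI$ in $(A,H)$ has $\Phi$ still autonomous (since $\Phi\in\X_*(\lambda)$ forces $\Phi^{x,r}=\Phi$), but the almost-minimality and the domain $A$ are extra features that must be shown to disappear under blow-up.

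First I would argue by contradiction: suppose $\H^{n-3}(\S_{A\cap\pa H}(E))>0$. Then $\H^{n-3}_\infty(\S_{A\cap\pa H}(E))>0$ by \eqref{zerossezero}, and by the density estimate \eqref{dens} there is a point $x_0\in\S_{A\cap\pa H}(E)$ and a sequence $r_h\to 0$ with $r_h/\dist(x_0,\pa A)\to 0$ and
\[
\H^{n-3}_\infty\big(\S_{A\cap\pa H}(E)\cap B_{x_0,r_h}\big)\ge c(n)\,r_h^{n-3}\,.
\]
Set $F_h=E^{x_0,r_h}$. By the scaling remark recorded just before the proof of Proposition \ref{lemma apertura}, $F_h$ is a $(\La r_h,r_0/r_h)$-minimizer of $\PHI^{x_0,r_h}=\PHI$ in $(A^{x_0,r_h},H)$ — here I use $x_0\in\pa H$ so that $H^{x_0,r_h}=H$, and $\Phi\in\X_*(\lambda)$ so that $\Phi^{x_0,r_h}=\Phi$. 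Moreover $\La r_h\to 0$, $r_0/r_h\to\infty$ (or stays $+\infty$), and $A^{x_0,r_h}$ eventually contains every compact subset of $\R^n$. Rescaling the lower bound as in the proof of Proposition \ref{lemma apertura} gives
\[
\H^{n-3}_\infty\big(\S_{B\cap\pa H}(F_h)\big)=\frac{\H^{n-3}_\infty\big(\S_{A\cap\pa H}(E)\cap B_{x_0,r_h}\big)}{r_h^{n-3}}\ge c(n)>0\,.
\]

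Next I would apply Lemma \ref{lemma compactness} with $\Phi_h\equiv\Phi$, $\Lambda_h=\La r_h\to 0$ and $r_h'=r_0/r_h\to\infty$: up to a subsequence $F_h\to F$ locally in $\R^n$, where $F$ is a $(0,\infty)$-minimizer of $\PHI$ in $(\R^n,H)$ — that is, an honest minimizer of $\PHI$ in $(\R^n,H)$, and in particular its restriction is a minimizer in $(B,H)$. Applying \eqref{usc Hinfinito} with $K=\cl(B)\cap$ (a fixed compact neighborhood) and $s=n-3$ yields
\[
\H^{n-3}_\infty\big(\S_{B\cap\pa H}(F)\big)\ge\limsup_{h\to\infty}\H^{n-3}_\infty\big(\S_{B\cap\pa H}(F_h)\big)\ge c(n)>0\,,
\]
so $\H^{n-3}(\S_{B\cap\pa H}(F))>0$ by \eqref{zerossezero}, contradicting $\Phi\in\X_{**}(\lambda)$. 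This proves the lemma.

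The only genuine point requiring care — the "main obstacle", though it is mild — is checking that the hypotheses of Lemma \ref{lemma compactness} are met after rescaling: that $\Lambda_h=\La r_h$ has a finite limit (it tends to $0$), that $r_h'=r_0/r_h$ has a \emph{positive} limit (it tends to $+\infty$, which is allowed), and that the constant sequence $\Phi_h\equiv\Phi$ trivially converges in $C^0(\mathbf S^{n-1})$; one also needs $A^{x_0,r_h}\uparrow\R^n$ in the sense of eventually containing compacts, which is exactly \eqref{limite}. Everything else is a verbatim repetition of the proof of Proposition \ref{lemma apertura}, which is why the excerpt can legitimately dispatch it with "the same argument gives the following lemma."
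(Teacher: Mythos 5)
Your proposal is correct and follows essentially the same route as the paper, which itself dispatches this lemma by repeating the blow-up argument of Proposition \ref{lemma apertura}: rescale at a point of positive $\H^{n-3}_\infty$-density on $\S_{A\cap\pa H}(E)$, use that $\Phi^{x_0,r_h}=\Phi$ and $H^{x_0,r_h}=H$, pass to a limit minimizer of $\PHI$ in $(\R^n,H)$ via Lemma \ref{lemma compactness}, and contradict $\Phi\in\X_{**}(\lambda)$ through \eqref{usc Hinfinito} and \eqref{zerossezero}. Your added checks (that $\Lambda r_h\to 0$, $r_0/r_h\to\infty$, and $A^{x_0,r_h}$ eventually contains every compact set) are exactly the points the paper leaves implicit.
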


  \begin{proof}[Proof of Lemma \ref{lemma X figo}] If $E$ is a $(\Lambda,r_0)$-minimizer of $\PHI$ in $(A,H)$ with $\H^{n-3}(\S_{A\cap\pa H}(E))>0$, then by  arguing as in the proof of Proposition \ref{lemma apertura} we can find $r_h\to 0$ as $h\to\infty$ and $x\in\S_{A\cap\pa H}(E)$ such that
  \[
  \H^{n-3}_\infty(\S_{A\cap\pa H}(E)\cap B_{x,r_h})\ge c(n)\,r_h^{n-3}\,.
  \]
  Hence $E_h=E^{x,r_h}$ is $(\La\,r_h,r_0/r_h)$-minimizer of $\PHI$ in $(B^{x,r},H^{x,r})$. By Lemma \ref{lemma compactness} there exists a minimizer $F$ of $\PHI$ in $(\R^n,H)$ such that $\H^{n-3}_\infty(\S_{B\cap\pa H}(F))\ge c(n)$, against $\Phi\in\X_{**}(\lambda)$.
  \end{proof}

 We now come to the proof of Lemma \ref{lemma chiusura}. Given $\Phi_h\to\Phi$ and a minimizer $E$ of $\PHI$, we shall need to approximate $E$ by minimizers of \(\PHI_h\). This will be done by minimizing $\PHI_h$ plus a suitable lower order perturbation.

\begin{definition} Given $g\in L^\infty_{\loc}(A)$ one says that {\it $E$ is a minimizer of $\PHI+\int g$ on $(A,H)$} if $E\subset H$ is a set of locally finite perimeter in $A$, and
  \begin{equation}
    \label{minimality phi g}
      \PHI(E;W\cap H)+\int_{W\cap H\cap E}g(x)\,dx\le\PHI(F;W\cap H)+\int_{W\cap H\cap F}g(x)\,dx\,,
  \end{equation}
  whenever $F\subset H$ and $E\Delta F\cc W$ with $W\cc A$ open.
  \end{definition}
  Note that if $E$ is a minimizer of $\PHI+\int g$ on $(A,H)$, then  for every $A'\cc A$ one has that $E$ is a $(\Lambda,\infty)$-minimizer of $\PHI$ in $(A',H)$ with $\Lambda=\|g\|_{L^\infty(A')}$. In particular, $R_A(E)$ is always a $C^1$-manifold with boundary. Moreover, by exploiting the Euler-Lagrange equation associated to \eqref{minimality phi g} (more precisely, we use the second order elliptic PDE satisfied by the first derivatives of any function $u$ whose graph locally coincides with $R_A(E)$), one finds that, if in addition $g\in\Lip(\R^n)$, then $R_A(E)$ is actually a $C^{2,\a}$-manifold with boundary for every $\a<1$, and hence the second fundamental form ${\rm II}_E$ is a continuous function on $R_A(E)$. It thus makes sense to define a Borel measure $|{\bf II}_E|^2$ on $\R^n$ by setting
  \[
  |{\bf II}_E|^2=\,|{\rm II}_E|^2\,\H^{n-1}\llcorner R_A(E)\,,
  \]
  compare with \eqref{II E misura}. The continuity of ${\rm II}_E$ on $R_A(E)$ guarantees that $|{\bf II}_E|^2$ is a Radon measure on $A\setminus\S_A(E)$.

  \begin{lemma}
  \label{lemma sci II2}
  Let $\{\Phi_h\}_{h\in\N}\subset\X_*(\lambda)$ with $\Phi_h\to\Phi$ in $C^{0}(\mathbf S^{n-1})$ as $h\to\infty$, $\{g_h\}_{h\in\N}\subset\Lip(\R^n)$ with \(\Lip\, g_h\le C\) and \(g_h\to g\) locally uniformly on $\R^n$ as $h\to\infty$, and let $E_h$ (resp., $E$) be a minimizer of $\PHI_h+\int g_h$ (resp., $\PHI+\int g)$ on $(A,H)$, with $E_h\to E$ locally in $A$ as $h\to\infty$. Then,
  \begin{equation}
    \label{lsc}
      |{\bf II}_E|^2(A')\le\liminf_{h\to\infty}|{\bf II}_{E_h}|^2(A')\,,
  \end{equation}
  for every open set $A'\subset A$.
  \end{lemma}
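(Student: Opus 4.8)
The plan is to establish the lower-semicontinuity of $|{\bf II}_{E_h}|^2$ by combining two ingredients: a pointwise convergence statement for the second fundamental forms on the regular sets, together with a Fatou-type argument, and a control on the size of the singular sets that prevents concentration of curvature there. First I would fix an open set $A' \subset A$; since $E_h \to E$ locally in $A$ and each $E_h$ is a $(\Lambda, \infty)$-minimizer of $\PHI_h$ on $(A'', H)$ for any $A'' \cc A$ with $\Lambda = \sup_h \Lip\, g_h \le C$, Lemma \ref{lemma compactness} gives, for every $\e > 0$ and $K \cc A'$, the inclusion $\S_K(E_h) \subset I_\e(\S_K(E))$ for $h$ large. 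Away from a small neighborhood of $\S_{A'}(E)$, the $E_h$ are uniformly regular in the following sense: by \eqref{reg exc} and the strict-minimality quantitative estimates, on any compact piece of $R_{A'}(E)$ the boundaries $H \cap \pa E_h$ are eventually graphs of $C^{2,\a}$ functions $u_h$ over the corresponding graph of $E$, with $u_h \to u$ in $C^2$ on compact subsets. This last step uses that the $u_h$ solve a uniformly elliptic second-order PDE (the Euler–Lagrange equation of $\PHI_h + \int g_h$), whose coefficients converge locally uniformly because $\Phi_h \to \Phi$ in $C^{0}(\mathbf S^{n-1})$ forces (by the compactness of $\X_*(\lambda)$ in $C^{2,1}$) convergence of $\nabla\Phi_h, \nabla^2\Phi_h$ as well; Schauder estimates then upgrade $C^0$-convergence of the $u_h$ to $C^{2,\a}_{\loc}$-convergence, so ${\rm II}_{E_h} \to {\rm II}_E$ locally uniformly on $R_{A'}(E)$.

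With this in hand, the argument is a routine exhaustion. Fix a compact set $K \cc R_{A'}(E) \cap A'$; for $h$ large, $K$ is covered by graph pieces of $H \cap \pa E_h$ and the Hilbert–Schmidt norms converge uniformly there, whence
\[
\int_{K} |{\rm II}_E|^2 \, d\H^{n-1} = \lim_{h\to\infty} \int_{K_h} |{\rm II}_{E_h}|^2 \, d\H^{n-1} \le \liminf_{h\to\infty} |{\bf II}_{E_h}|^2(A')\,,
\]
where $K_h$ is the corresponding piece of $R_{A'}(E_h)$ (one also uses that $\H^{n-1}$ restricted to the graphs converges, again a consequence of the $C^2$-convergence). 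Taking the supremum over all such compact $K$ and using that $|{\bf II}_E|^2$ is a Radon measure on $A' \setminus \S_{A'}(E)$ — so that $|{\bf II}_E|^2(A') = |{\bf II}_E|^2(A' \setminus \S_{A'}(E)) = \sup_K \int_K |{\rm II}_E|^2 \, d\H^{n-1}$, with the first equality valid because $\H^{n-1}(\S_{A'}(E)) = 0$ by \eqref{n-3 interior}–\eqref{n-2 boundary} — yields \eqref{lsc}.

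The main obstacle, and the step requiring the most care, is the $C^2$-convergence of the regular sets, i.e. justifying that one may pass from $C^0$-proximity of $H \cap \pa E_h$ to $R_{A'}(E)$ (supplied by Lemma \ref{lemma compactness} through \eqref{half haus} and the excess characterization) to convergence of the second fundamental forms. This is delicate precisely near the free boundary $\pa H$: the graphs $u_h$ satisfy a Neumann-type condition $\nabla\Phi_h(x,\nu_{E_h})\cdot\nu_H = 0$ on $\{u_h = \cdot\} \cap \pa H$, and one must show the boundary Schauder estimates are uniform in $h$, which hinges on the uniform ellipticity \eqref{elliptic} and the uniform $C^{2,1}$-bounds on $\Phi_h$ built into $\X_*(\lambda)$. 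Once this uniform boundary regularity is granted, the remainder is a standard lower-semicontinuity-via-exhaustion argument, and the absence of curvature concentration on $\S_{A'}(E)$ is automatic since that set is $\H^{n-1}$-null and $|{\bf II}_E|^2$ charges no lower-dimensional set by construction.
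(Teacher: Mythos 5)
Your strategy is essentially the paper's: local $C^{2,\a}$ graphicality of $H\cap\pa E_h$ over the regular set of $E$, convergence of the second fundamental forms there, and then a Fatou/exhaustion argument exploiting that the limit measure does not charge the singular set. The one genuine divergence is precisely the step you yourself single out as the main obstacle: you ask for $C^{2}$ convergence of the graphs \emph{up to the free boundary} $\pa H$, i.e.\ Schauder estimates, uniform in $h$, for the Neumann-type condition $\nabla\Phi_h(\nu_{E_h})\cdot\nu_H=0$, and you leave this unproven. That claim is not a routine consequence of \eqref{elliptic} and the uniform $C^{2,1}$ bounds on $\X_*(\lambda)$; it would require the boundary regularity theory of \cite{dephilippismaggiCAPILLARI} in a quantitative form uniform in $h$, so as written your proof has a gap exactly there.

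The gap is removable, because the step is unnecessary, and this is how the paper proceeds: it establishes graphical $C^{2,\a}$ convergence only around \emph{interior} regular points $x\in R_{A\cap H}(E)$, in cylinders $\C_x\cc A\cap H$, and then concludes from the two observations that $|{\bf II}_E|^2(\S_A(E))=0$ and $|{\bf II}_E|^2(A\cap\pa H)=0$ --- the latter because $R_A(E)\cap\pa H$ is the $(n-2)$-dimensional boundary of a $C^1$ manifold with boundary, hence $\H^{n-1}$-negligible. Since \eqref{lsc} is only a lower bound on the liminf, discarding any curvature of the $E_h$ near $\pa H$ costs nothing: restricting your compact exhaustion to sets $K\cc R_{A'\cap H}(E)$, away from $\pa H$, already recovers the full mass $|{\bf II}_E|^2(A')$, and with that modification your argument closes. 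Two minor inaccuracies: the almost-minimality parameter on $A''\cc A$ is $\Lambda=\sup_h\|g_h\|_{L^\infty(A'')}$ (finite because $g_h\to g$ locally uniformly), not $\sup_h \Lip\, g_h$; and the identity $|{\bf II}_E|^2(A')=|{\bf II}_E|^2(A'\setminus\S_{A'}(E))$ is automatic from the definition $|{\bf II}_E|^2=|{\rm II}_E|^2\,\H^{n-1}\llcorner R_A(E)$, with no need to invoke \eqref{n-3 interior}--\eqref{n-2 boundary}.
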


  \begin{proof} The regularity, in particular \cite[Lemma 3.4]{dephilippismaggiCAPILLARI} theory ensures that if $x\in R_{A\cap H}(E)$, then there exist $h_x\in\N$, $r_x>0$ and $\nu_x\in \mathbf{S}^{n-1}$ such that, if we set
  \begin{eqnarray*}
  \C_x=x+\Big\{y\in\R^n:|y\cdot\nu_x|<r_x\,,\big|y-(y\cdot\nu_x)\nu_x\big|<r_x\Big\}\,,
  \\
  \D_x=x+\Big\{y\in \R^n:y\cdot\nu_x=0\,,\big|y-(y\cdot\nu_x)\nu_x\big|<r_x\Big\}\,,
  \end{eqnarray*}
  then $\C_x\cc A\cap H$ and there exist $u_h,u\in C^{2,\a}(\D_x)$ with $u_h\to u$ in $C^{2,\a}(\D_x)$ as $h\to\infty$ and
  \begin{eqnarray*}
    \C_x\cap\pa E&=&\C_x\cap R_A(E)=\Big\{z+u(z)\,\nu_x:z\in\D_x\Big\}\,,
    \\
    \C_x\cap\pa E_h&=&\C_x\cap R_A(E_h)=\Big\{z+u_h(z)\,\nu_x:z\in\D_x\Big\}\,,
  \end{eqnarray*}
  for every $h\ge h_x$. In particular, if $\vphi\in C^0(\C_x)$, then, as $h\to\infty$,
  \[
  \vphi(z,u_h)\,\sqrt{1+|\nabla u_h|^2}\,|{\rm II}_{E_h}(z+u_h\nu_x)|^2\to \vphi(z,u)\,\sqrt{1+|\nabla u|^2}\,|{\rm II}_{E}(z+u\nu_x)|^2\,,
  \]
  for every $z\in\D_x$, and, actually, locally uniformly on $z\in\D_x$. Thus, by the area formula for graphs one finds
  \[
  \int_{\R^n}\vphi\,d|{\bf II}_E|^2=\lim_{h\to\infty}
  \int_{\R^n}\vphi\,d|{\bf II}_{E_h}|^2\,,\qquad\forall \vphi\in C^0(\C_x)\,.
  \]
  By a covering argument we conclude that
  \begin{equation}
    \label{dai}
      \int_{\R^n}\vphi\,d|{\bf II}_E|^2=\lim_{h\to\infty}
  \int_{\R^n}\vphi\,d|{\bf II}_{E_h}|^2\,,\qquad\forall \vphi\in C^0_c((A\cap H)\setminus \S_A(E))\,.
  \end{equation}
  If now $A'\subset A$ is open, then by \eqref{dai},
  \[
  |{\bf II}_E|^2\Big((A'\cap H)\setminus\S_A(E)\Big)\le\liminf_{h\to\infty}|{\bf II}_{E_h}|^2\Big((A'\cap H)\setminus\S_A(E)\Big)\le
  \liminf_{h\to\infty}|{\bf II}_{E_h}|^2(A')\,.
  \]
  We deduce \eqref{lsc} as $|{\bf II}_E|^2(A\cap\pa H)=0$ and $|{\bf II}_E|^2(\S_A(E))=0$.
  \end{proof}

  We now exploit a second variation argument to show that the $\H^{n-3}$-negligibility of singular sets implies uniform $L^2$-estimates on second fundamental forms.

  \begin{lemma}\label{lemma potenziale stima II}
  Let $\Phi\in\X_{**}(\lambda)$, $g\in C^2(\R^n)$, $A$ be a bounded open set, and $E$ be a minimizer of $\PHI+\int g$ on $(A,H)$. Then,
  \[
        \frac{|{\bf II}_E|^2(B_{x,r})}{r^{n-3}}\le C_0(n,\lambda,\Lip(g))\,,\qquad\forall B_{x,2r}\cc A\,.
  \]
  \end{lemma}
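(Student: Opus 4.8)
The plan is to prove Lemma \ref{lemma potenziale stima II} by a covering argument that reduces a global $L^2$-estimate on the second fundamental form to a uniform local estimate valid on dyadic balls, combined with the $\H^{n-3}$-negligibility of the singular set (which implies $\H^{n-3}_\infty$-negligibility by \eqref{zerossezero}) and a contradiction-compactness scheme against Proposition \ref{lemma apertura}. The key mechanism is that, on any small ball $B_{x,\rho}\cc A$ centered at $x\in R_A(E)$ (interior or boundary regular point) at a scale where $R_A(E)\cap B_{x,\rho}$ is a $C^{2,\alpha}$-graph, one has a pointwise bound on $|{\rm II}_E|$ coming from elliptic estimates for the Euler--Lagrange PDE, but the quantitative constant degenerates as $x$ approaches $\S_A(E)$; the point of the lemma is that $\S_A(E)$ is so small in the Hausdorff sense that the corresponding ``bad'' region carries no $|{\bf II}_E|^2$-mass, while on the ``good'' region the estimate is locally uniform. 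So the strategy is: (i) establish the dyadic estimate $|{\bf II}_E|^2(B_{x,r})\le C_0 r^{n-3}$ by contradiction, and (ii) deduce the stated conclusion.

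First I would set up the contradiction. Suppose the conclusion fails: there is a sequence $\Phi_h\in\X_{**}(\lambda)$ (one may take $\Phi_h\equiv\Phi$ since $\Phi$ is fixed), $g_h\equiv g$, bounded open sets $A_h$, minimizers $E_h$ of $\PHI+\int g$ on $(A_h,H)$, and balls $B_{x_h,2r_h}\cc A_h$ with $|{\bf II}_{E_h}|^2(B_{x_h,r_h})/r_h^{n-3}\to\infty$. Rescaling via $F_h=(E_h)^{x_h,r_h}$ (which, as noted in the excerpt, is a $(\Lambda r_h,\infty)$-minimizer of $\PHI^{x_h,r_h}+\int g(x_h+r_h\cdot)$, hence of $\PHI^{x_h,r_h}$ with $\Lambda=r_h\Lip(g)\to 0$ on compact subsets once we also rescale $g$), we reduce to $r_h=1$, $B_{0,2}\cc A_h$, and $|{\bf II}_{F_h}|^2(B_{0,1})\to\infty$, with the integrands converging (since $\X_*(\lambda)$ is compact, up to subsequence $\Phi^{x_h,r_h}\to\Phi_\infty\in\X_*(\lambda)$, and in the autonomous case $\Phi^{x_h,r_h}=\Phi$) and the rescaled potentials converging locally uniformly to a constant (a null potential after shifting). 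Here there are two cases depending on whether $x_h\in\pa H$ or $\dist(x_h,\pa H)/r_h\to\infty$ (the intermediate regime, $\dist(x_h,\pa H)/r_h$ bounded, can be normalized to the boundary case by translating the half-space, using Remark \ref{rmk: halfspace}). In either case, by Lemma \ref{lemma compactness} (via \cite[Theorem 2.9]{dephilippismaggiCAPILLARI}) and local compactness, $F_h\to F$ locally in $B_{0,2}$ to a minimizer $F$ of $\PHI_\infty$ in $(B_{0,2},H)$ or in $(B_{0,2},\R^n)$.

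Next I would invoke Lemma \ref{lemma sci II2}: since $\Phi^{x_h,r_h}\to\Phi_\infty$ in $C^0(\mathbf S^{n-1})$, the rescaled potentials $g_h$ are uniformly Lipschitz and converge locally uniformly, and $F_h\to F$ locally, lower semicontinuity gives $|{\bf II}_F|^2(B_{0,1})\le\liminf_h|{\bf II}_{F_h}|^2(B_{0,1})$. Now, by \cite[Theorem 3.1]{dephilippismaggiCAPILLARI} (the characterization \eqref{reg exc}, \eqref{singular set char}) together with the standing hypothesis $\Phi_\infty\in\X_{**}(\lambda)$ (closedness of $\X_{**}(\lambda)$ under uniform convergence is exactly Proposition \ref{lemma chiusura}, but to avoid circularity I would instead note that $\Phi\in\X_{**}(\lambda)$ is given and use Lemma \ref{lemma X figo} applied to $F$ as a $(0,\infty)$-minimizer, so $\H^{n-3}(\S(F))=0$), the blow-up limit $F$ is a minimizer of an integrand in $\X_{**}(\lambda)$, hence $\H^{n-3}(\S_{B\cap\pa H}(F))=0$ and also $\H^{n-3}(\S_{B\cap H}(F))=0$ by \eqref{n-3 interior}. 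The crucial step is then to show that $|{\bf II}_F|^2(B_{0,1})<\infty$ — i.e. that the second fundamental form of a minimizer with $\H^{n-3}$-negligible singular set is $L^2$ on compact subsets — which is precisely the content of Lemma \ref{lemma II implica n-3} referenced in the introduction; this bounds $\liminf_h|{\bf II}_{F_h}|^2(B_{0,1})$ from below by a \emph{finite} quantity, contradicting $|{\bf II}_{F_h}|^2(B_{0,1})\to\infty$. This establishes the dyadic estimate. Finally, for general $B_{x,2r}\cc A$ with $\dist(x,\pa A)$ possibly large relative to $r$, the same contradiction-rescaling argument applies verbatim since the statement is scale-invariant once $\Lambda$ is rescaled, and the hypothesis $B_{x,2r}\cc A$ is preserved under the blow-up.

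The main obstacle I expect is the finiteness input $|{\bf II}_F|^2(B_{0,1})<\infty$ for blow-up limits: this is the non-soft ingredient and is exactly where the $\H^{n-3}$-negligibility (upgraded from the $\H^{n-2}$-negligibility of \cite{dephilippismaggiCAPILLARI}) gets used — one needs a Simon-type argument showing that a minimal (or almost-minimal) hypersurface with boundary, whose singular set has locally finite $\H^{n-3}$-measure, has locally square-integrable second fundamental form, via the Simons-type inequality for $|{\rm II}|$ and a logarithmic cutoff/Hardy inequality argument exploiting that $\H^{n-3}$ is the critical dimension for the weight $\dist(\cdot,\S)^{-2}$ against $\H^{n-1}$. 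Everything else — the rescaling bookkeeping for $\Lambda$ and for the potential $g$, the two geometric cases (interior vs. boundary), the passage to the limit, and the covering/dyadic reduction — is routine given the lemmas already proved in the excerpt.
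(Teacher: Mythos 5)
There is a genuine gap, and it is structural. Your contradiction--compactness scheme hinges on transferring the finiteness of $|{\bf II}_F|^2(B_{0,1})$ for the blow-up limit $F$ back to the rescaled sequence $F_h$, but the only semicontinuity available (Lemma \ref{lemma sci II2}, i.e. \eqref{lsc}) goes the wrong way: it gives $|{\bf II}_F|^2(B_{0,1})\le\liminf_h|{\bf II}_{F_h}|^2(B_{0,1})$, which is vacuously compatible with $|{\bf II}_{F_h}|^2(B_{0,1})\to\infty$. Knowing that the limit has finite curvature mass therefore produces no contradiction; to conclude you would need an \emph{upper} semicontinuity statement $\limsup_h|{\bf II}_{F_h}|^2(B_{0,1})\lesssim|{\bf II}_F|^2(\cl(B_{0,1}))$, and the proof of Lemma \ref{lemma sci II2} only yields convergence of the curvature measures away from $\S(F)$; near $\S(F)$ the mass of $|{\bf II}_{F_h}|^2$ could a priori concentrate and blow up, and ruling this out is exactly the estimate you are trying to prove. (Note that in the paper this lower semicontinuity is used in the legitimate direction, in the proof of Proposition \ref{lemma chiusura}: a uniform bound on the sequence is transferred to the limit, not vice versa.) Compounding this, the ingredient you call crucial --- that $\H^{n-3}$-negligibility of the singular set implies local $L^2$-bounds on ${\rm II}$ --- is \emph{not} Lemma \ref{lemma II implica n-3}, which is the converse implication ($|{\bf II}_E|^2(B)<\infty\Rightarrow\H^{n-3}(\S_B(E))=0$); the implication you need, with a constant depending only on $(n,\lambda,\Lip(g))$, is precisely the content of Lemma \ref{lemma potenziale stima II} itself, so as written the argument is circular.

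The paper's proof is direct and contains exactly the mechanism you only sketch in your last paragraph. One first derives from the second variation of $\PHI+\int g$ (Lemma \ref{lemma variazione seconda}) the stability-type inequality \eqref{second variation inequality} for test functions $\zeta\in C^1_c(A)$ with $\spt\zeta\cap\S_A(E)=\emptyset$. Then, since $\Phi\in\X_{**}(\lambda)$ gives $\H^{n-3}(\S_A(E))=0$ (Lemma \ref{lemma X figo} plus \eqref{n-3 interior}), one covers $\S_A(E)\cap\spt\zeta$ by balls $B_{x_k,2\e_k}$ with $\sum_k\e_k^{n-3}<\e$, builds cutoffs $\psi_k$ as in \eqref{eps2}, and uses the density estimate $P(E;B_{x,r})\le C(n,\lambda)r^{n-1}$ of \cite[Equation (2.47)]{dephilippismaggiCAPILLARI} to see that the extra gradient energy is at most $C\sum_k\e_k^{n-1}/\e_k^2=C\sum_k\e_k^{n-3}<C\e$; letting $\e\to0$ extends \eqref{second variation inequality} to every $\zeta\in C^1_c(A)$ (this is the capacity/Hardy-type step you anticipated, with $n-3$ entering exactly through $\e_k^{n-1}/\e_k^2$). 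Finally, choosing $\zeta$ equal to $1$ on $B_{x,r}$, supported in $B_{x,2r}$, with $|\nabla\zeta|\le C/r$, the same density estimate gives $|{\bf II}_E|^2(B_{x,r})\le C\,P(E;B_{x,2r})/r^2\le C\,r^{n-3}$, with $C=C(n,\lambda,\Lip(g))$. If you want to rescue your write-up, you should abandon the blow-up/contradiction frame and carry out this stability-plus-cutoff argument quantitatively; no compactness is needed.
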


  \begin{proof} By Lemma \ref{lemma variazione seconda} in the appendix, there exists a constant $C=C(n,\lambda,\Lip(g))$ such that
  \begin{eqnarray}
    \label{second variation inequality}
    \int_{R_A(E)}|{\rm II}_E|^2\,\zeta^2\,d\H^{n-1}\le C\,\int_{R_A(E)}|\nabla\zeta|^2+\zeta^2\,d\H^{n-1}\,,
  \end{eqnarray}
  whenever $\zeta\in C^1_c(A)$ with $\spt\zeta\cap \S_A(E)=\emptyset$. We shall now exploit $\Phi\in\X_{**}(\lambda)$ to deduce that \eqref{second variation inequality} holds true for every $\zeta\in C^1_c(A)$. To this end let us fix such a $\zeta\in C^1_c(A)$, and let us assume without loss of generality that $|\zeta|\le1$ on $\R^n$. Since $E$ is a $(\Lambda,\infty)$-minimizer of $\PHI$ in $(A,H)$, by Lemma \ref{lemma X figo} and by \eqref{n-3 interior} one has $\H^{n-3}(\S_A(E))=0$. In particular, given $\e>0$ we can find a countable cover $\{F_k\}_{k\in\N}$ of $\S_A(E)$ such that
  \begin{equation}
    \label{eps}
      \diam(F_k)<\e_k\,,\qquad \sum_{k\in\N}\e_k^{n-3}<\e\,.
  \end{equation}
  By \eqref{eps}, for every $k\in\N$ we choose $x_k\in F_k$ so that $F_k\subset B_{x_k,2\e_k}$. Since $\{B_{x_k,2\e_k}\}_{k\in\N}$ is an open covering of $\S_A(E)$, by compactness $\{B_{x_k,2\e_k}\}_{k=1}^N$ is an open covering of $\S_A(E)\cap\spt\zeta$ for some $N\in\N$, and thus of $I_\de(\S_A(E)\cap\spt\zeta)$ for some $\de>0$ such that $\de\to 0^+$ as $\e\to 0^+$. Correspondingly we consider $\psi_k\in C^1_c(B_{x_k,3\e_k};[0,1])$ such that
  \begin{equation}
    \label{eps2}
      \mbox{$\psi_k=1$ on $B_{x_k,2\e_k}$}\,,\qquad |\nabla\psi_k|\le\frac{2}{\e_k}\,,
  \end{equation}
  and set $\psi=\max\{\psi_k:1\le k\le N\}$. In this way,
  \[
  \psi=1\qquad\mbox{on $I_\de(\S_A(E)\cap\spt\zeta)$}\,.
  \]
  This implies that $\zeta_0=(1-\psi)\,\zeta$ is a Lipschitz function with $\spt\zeta_0\cap \S_A(E)=\emptyset$. By approximation, we can apply \eqref{second variation inequality} to $\zeta_0$ in order to find
  \begin{eqnarray}
    \label{second variation inequality 0}
    \int_{R_A(E)\setminus I_\de(\S_A(E))}|{\rm II}_E|^2\,\zeta^2\,d\H^{n-1}\le C\,\int_{R_A(E)}|\nabla\zeta|^2+|\nabla\psi|^2+\zeta^2\,d\H^{n-1}\,,
  \end{eqnarray}
  with $C=C(n,\lambda,\Lip(g))$. By the second conditions in \eqref{eps} and \eqref{eps2} we easily find
  \[
  \int_{R_A(E)}|\nabla\psi|^2\le \,\sum_{k=1}^N\,\int_{R_A(E)\cap B_{x_k,3\e_k}}|\nabla\psi_k|^2
  \le 4\,\sum_{k=1}^N\,\frac{P(E;B_{x_k,3\e_k})}{\e_k^2}\le C\,\sum_{k\in\N}\e_k^{n-3}<C\,\e\,,
  \]
  where we have used the upper density estimate $P(E;B_{x,r})\le C(n,\lambda)\,r^{n-1}$, see \cite[Equation (2.47)]{dephilippismaggiCAPILLARI}. By plugging this last estimate into \eqref{second variation inequality 0}, and then letting $\e\to 0^+$,  we conclude as desired that \eqref{second variation inequality} holds for every $\zeta\in C^1_c(A)$. Finally, for $B_{x,2r}\cc A$ and $\zeta\in C^1_c(B_{x,2r})$ with $\zeta=1$ on $B_{x,r}$ and $|\nabla\zeta|\le C/r$, \eqref{second variation inequality} gives
  \[
  |{\bf II}_E|^2(B_{x,r})\le C\,\frac{P(E;B_{x,r})}{r^2}\le C\,r^{n-3}\,,
  \]
  thanks again to the upper density estimate \cite[Equation (2.47)]{dephilippismaggiCAPILLARI}.
  \end{proof}

  We finally prove that if $|{\bf II}_E|^2$ is a finite measure, then the singular set is $\H^{n-3}$-negligible. We start with the following lemma.

  \begin{lemma}\label{lemma delta}
  There exists $\de=\de(n,\lambda)$ such that if $\Phi\in\X_*(\lambda)$, $E$ is a minimizer of $\PHI$ in $(B,H)$, $0\in\pa H$, and
  \[
  |{\bf II}_E|^2(B)\le\de\,,
  \]
  then $0\in R_E(B)$.
  \end{lemma}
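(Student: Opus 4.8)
The plan is to argue by contradiction and compactness, reducing the statement (whose conclusion we read as $0\notin\S_B(E)$, there being nothing to prove if $0\notin M_B(E)$) to the rigidity fact: \emph{if $\Phi\in\X_*(\lambda)$ and $E$ is a minimizer of $\PHI$ in $(B,H)$ with $0\in M_B(E)\cap\partial H$ and ${\rm II}_E\equiv0$ on $R_B(E)$, then $0\in R_B(E)$}. Suppose the lemma fails: then for each $h\in\N$ there are $\Phi_h\in\X_*(\lambda)$ and a minimizer $E_h$ of $\PHI_h$ in $(B,H)$ with $0\in M_B(E_h)\setminus R_B(E_h)=\S_B(E_h)$ and $|{\bf II}_{E_h}|^2(B)\le1/h$. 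By compactness of $\X_*(\lambda)$ and Lemma~\ref{lemma compactness} (applied with $\Lambda_h=0$ and $r_h=r_0=+\infty$), after passing to a subsequence $\Phi_h\to\Phi\in\X_*(\lambda)$ uniformly and $E_h\to E$ locally in $B$, with $E$ a minimizer of $\PHI$ in $(B,H)$; choosing $K\cc B$ with $0\in K$, \eqref{half haus} together with the closedness of $\S_K(E)$ gives $0\in\S_K(E)\subset\S_B(E)$. Lemma~\ref{lemma sci II2}, applied with $g_h\equiv g\equiv0$, yields $|{\bf II}_E|^2(A')\le\liminf_h|{\bf II}_{E_h}|^2(A')=0$ for every open $A'\cc B$; since $\Phi$ is autonomous, $R_B(E)$ is a $C^2$ manifold with boundary, ${\rm II}_E$ is continuous on it, and therefore ${\rm II}_E\equiv0$ on $R_B(E)$. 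As $0\in\S_B(E)$, this $E$ contradicts the rigidity fact.

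To prove the rigidity, observe that ${\rm II}_E\equiv0$ makes $R_{B\cap H}(E)$ locally flat: near each of its points $\partial E$ is a piece of an affine hyperplane with $\nu_E$ locally constant, and (as in the proof of Lemma~\ref{lemma sci II2}) two overlapping flat pieces lie in the same hyperplane, so $R_{B\cap H}(E)$ splits into relatively clopen families $\tilde R_\alpha$ lying in pairwise distinct affine hyperplanes $P_\alpha$ (each $P_\alpha\neq\partial H$ because $\tilde R_\alpha\subset H$). No $\tilde R_\alpha$ can end along a free edge inside $B\cap H$: a flat piece of $\partial E$ cannot terminate in the interior without a second flat piece meeting it transversally, which would force an $(n-2)$-dimensional portion of $\S_{B\cap H}(E)$, against \eqref{n-3 interior}; hence $\tilde R_\alpha=P_\alpha\cap B\cap H$. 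For the same reason two distinct families cannot meet in $B\cap H$; and if the closures of two of them met on $B\cap\partial H$, then $P_\alpha\cap P_\beta\cap B\subset\partial H$ would give $\H^{n-2}(\S_{B\cap\partial H}(E))>0$, against \eqref{n-2 boundary}. Thus the half-hyperplanes $\{P_\alpha\cap B\cap H\}_\alpha$ have pairwise disjoint closures in $B$.

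Two cases remain. If $\{P_\alpha\}$ is locally finite in $B$, then each point of $M_B(E)$ lies in the closure of exactly one $P_\alpha\cap B\cap H$, i.e.\ in the flat $C^1$ manifold $P_\alpha\cap\cl(H)$, which has boundary contained in $\partial H$; hence $\S_B(E)=\emptyset$, contradicting $0\in\S_B(E)$. If instead $\{P_\alpha\}$ accumulates at some $q_0\in B$, pick distinct hyperplanes $P_{\alpha_k}\to P_\infty$ with $q_0\in P_\infty$; projecting points of $P_\infty$ onto the $P_{\alpha_k}$ shows that an $(n-1)$-dimensional relatively open subset of $P_\infty\cap B\cap H$ (or of $\partial H\cap B$, if $P_\infty=\partial H$) consists of points near which $\partial E$ has infinitely many flat sheets and so fails to be a $C^1$ manifold — hence lies in $\S_B(E)$ — which forces $\H^{n-1}(\S_B(E))>0$, again contradicting \eqref{n-3 interior}--\eqref{n-2 boundary}. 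In both cases we reach a contradiction, proving the rigidity and hence Lemma~\ref{lemma delta}.

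The crux is the rigidity step, i.e.\ upgrading ``${\rm II}_E\equiv0$'' to ``near $0$, $\partial E$ is a single flat half-hyperplane with boundary on $\partial H$''. Its two delicate ingredients are: (i) the absence of free edges, i.e.\ that each flat sheet $\tilde R_\alpha$ is a \emph{complete} half-hyperplane $P_\alpha\cap B\cap H$ — a $\partial\partial=0$/constancy phenomenon, which can also be extracted from the characterization \eqref{singular set char} of $\S_B(E)$; and (ii) the bookkeeping that excludes, via the a priori smallness of the singular set from \eqref{n-3 interior}--\eqref{n-2 boundary}, every configuration in which two distinct, or infinitely many, flat sheets of $\partial E$ approach $0$, each such configuration being shown to produce a singular set of dimension at least $n-2$ (an edge) or $n-1$ (an accumulation locus).
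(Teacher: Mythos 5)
Your reduction by contradiction and compactness is exactly the paper's: extract $\Phi_h\to\Phi$, $E_h\to E$ via Lemma \ref{lemma compactness}, get $0\in\S_B(E)$ from \eqref{half haus} (the paper uses continuity of the excess together with \eqref{reg exc}--\eqref{singular set char}, which amounts to the same), and use Lemma \ref{lemma sci II2} with $g_h\equiv0$ to get $|{\bf II}_E|^2(B)=0$; this part is correct. The genuine gap is in the rigidity step, precisely at the claim that no flat sheet can have a free edge. Your justification --- ``a flat piece of $\pa E$ cannot terminate in the interior without a second flat piece meeting it transversally, which would force an $(n-2)$-dimensional portion of $\S_{B\cap H}(E)$'' --- is an assertion, not an argument: nothing forces a second sheet to appear, and nothing forces the relative boundary of $\tilde R_\alpha$ inside $P_\alpha\cap B\cap H$ to be $(n-2)$-dimensional. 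A priori that edge could be any relatively closed subset of the sheet of dimension at most $n-3$ (for $n\ge4$, a sheet minus a tiny closed set of singular points is not excluded by \eqref{n-3 interior} through dimension counting), and ruling out exactly this scenario is the content of the step. The paper closes it with a measure-theoretic argument you only name in passing: with $A_i=B\cap H\cap L_i$ and $R_i=R_{B\cap H}(E)\cap L_i$, one checks $A_i\cap\pa_{L_i}R_i\subset\S_{B\cap H}(E)$, hence $\H^{n-2}(A_i\cap\pa_{L_i}R_i)=0$ by \eqref{n-3 interior}; since the distributional gradient of $1_{R_i}$ in the connected $(n-1)$-dimensional open set $A_i$ is carried by this relative boundary, the constancy theorem gives $R_i=A_i$. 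Your closing remark that this is a ``$\pa\pa=0$/constancy phenomenon'' shows you know what is missing, but the proof as written replaces it with the unjustified transversal-sheet mechanism, and the suggestion that it ``can be extracted from \eqref{singular set char}'' is not substantiated.

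The remaining bookkeeping is fine in outline, with some differences from the paper: you exclude accumulation of infinitely many sheets by producing an $(n-1)$-dimensional singular set, whereas the paper gets finiteness of the sheets meeting $B_{1/2}$ directly from the upper density estimate $P(E;B_{x,r})\le C(n,\lambda)\,r^{n-1}$, which is shorter and avoids the (sketchy, though repairable) ``infinitely many sheets near a point is not a $C^1$ manifold'' discussion; and your endgame (pairwise disjoint closures, hence $0$ would be regular) is a reorganization of the paper's ``two sheets through $0$ on $\pa H$'' contradiction with \eqref{n-2 boundary}. But until the no-free-edge step is proved by the constancy argument (or an equivalent quantitative one), the proposal has a gap at its central point.
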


  \begin{proof}
  We argue by contradiction. Let $\{\Phi_h\}_{h\in\N}\subset\X_*(\lambda)$ be such that for each $h\in\N$ there exists  a minimizer $E_h$ of $\PHI_h$ in $(B,H)$ with $|{\bf II}_{E_h}|^2(B)\to 0$ as $h\to\infty$ and $0\in\S_B(E_h)$ for every $h\in\N$. By the compactness of $\X_*(\lambda)$ and Lemma \ref{lemma compactness}, there exist $\Phi\in\X_*(\lambda)$ and $E$ a minimizer of $\PHI$ in $(B,H)$ such that, up to subsequences, $E_h\to E$ locally in $B$ as $h\to\infty$. Moreover, by \eqref{reg exc}, \eqref{singular set char} and the continuity of the excess, $0\in\S_B(E)$. By \eqref{half haus}, for every $\e>0$ and $r<1$ there exists $h_0$ such that $\S_{B_r}(E_h)\subset I_\e(\S_{B_r}(E))$ provided $h\ge h_0$. By Lemma \ref{lemma sci II2},
  \begin{eqnarray*}
  |{\bf II}_E|^2\Big(B\setminus\cl\Big(I_\e(\S_{B_r}(E))\Big)\Big)&\le&\liminf_{h\to\infty}|{\bf II}_{E_h}|^2\Big(B\setminus\cl\Big(I_\e(\S_{B_r}(E))\Big)\Big)
  \\
  &\le&
  \liminf_{h\to\infty}|{\bf II}_{E_h}|^2\Big(B\setminus\cl\Big(\S_{B_r}(E_h)\Big)\Big)=0\,.
  \end{eqnarray*}
  By the arbitrariness of $\e$ and $r$, $|{\bf II}_E|^2(B)=0$. We now show that this last fact implies the existence of {\it finitely} many hyperplanes $L_i$ such that
  \begin{equation}
    \label{cristino}
    M_{B_{1/2}}(E)\cap H=\bigcup_i\,L_i\cap B_{1/2}\cap H\,,\qquad L_i\cap L_j\cap B_{1/2}\cap H=\emptyset\qquad\forall i\ne j\,.
  \end{equation}
  Indeed, by $|{\bf II}_E|^2(B)=0$ we have that $R_B(E)$ is contained into the union of at most {\it countably} many hyperplanes $L_i$. Let us set $A_i=B\cap H\cap L_i$ and $R_i=R_{B\cap H}(E)\cap L_i$. We claim that
  \begin{equation}
    \label{claim}
      A_i\cap\pa_{L_i}R_i\subset\S_{B\cap H}(E)\,,
  \end{equation}
  where $\pa_{L_i}R_i$ denotes the boundary of $R_i$ as a subset of $L_i$. Indeed, $A_i\cap\pa_{L_i}R_i\subset M_{B}(E)\cap H$, so that if \eqref{claim} fails, then there exists $x\in A_i\cap\pa_{L_i}R_i\cap R_{B\cap H}(E)$. By using the local $C^1$-graphicality of $R_{B\cap H}(E)$ at $x$, we immediately see that $x$ belongs to the interior of $R_i$ seen as a subset of $L_i$, in contradiction with $x\in\pa_{L_i}R_i$. By \eqref{claim} and by \eqref{n-3 interior}, we find that $\H^{n-3}(A_i\cap\pa_{L_i}R_i)=0$, thus that $\H^{n-2}(A_i\cap\pa_{L_i}R_i)=0$. This implies that the distributional derivative of $1_{R_i}\in L^1_{\rm loc}(L_i)$ vanishes on the connected open set $A_i$: in other words, since $R_i\cap A_i\ne \emptyset$, it must be $R_i=A_i$. By the upper density estimate \cite[Equation (2.47)]{dephilippismaggiCAPILLARI}, there are finitely many hyperplanes $L_i$ such that $L_i\cap B_{1/2}\ne\emptyset$. This proves \eqref{cristino}. Since $0\in\S_{B\cap\pa H}(E)$, there must be $i\ne j$ such that $0\in L_i\cap L_j\cap\pa H$: but then, by \eqref{cristino}, $L_i\cap L_j\subset\S_{B\cap\pa H}(E)$, against \eqref{n-2 boundary}.
  \end{proof}

  \begin{lemma}\label{lemma II implica n-3}
  If $\Phi\in\X_*(\lambda)$, $E$ is a minimizer of $\PHI$ in $(B,H)$, and
  \[
  |{\bf II}_E|^2(B)<\infty\,,
  \]
  then $\H^{n-3}(\S_B(E))=0$.
  \end{lemma}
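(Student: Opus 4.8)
The plan is to convert the global bound $|{\bf II}_E|^2(B)<\infty$ into a pointwise lower bound on the $(n-3)$-dimensional density of the measure $|{\bf II}_E|^2$ along the boundary singular set, and then to read off $\H^{n-3}$-negligibility from a standard density--comparison estimate for Radon measures, using crucially that $|{\bf II}_E|^2$ gives no mass to the singular set. All the substance is already carried by Lemma \ref{lemma delta}; the rest is bookkeeping.

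First I would reduce to the boundary part. Since $M_B(E)\subset\cl(H)$, one has $\S_B(E)=\S_{B\cap H}(E)\cup\S_{B\cap\pa H}(E)$, and the interior piece $\S_{B\cap H}(E)$ is already $\H^{n-3}$-null by \eqref{n-3 interior}; so it remains to show $\H^{n-3}(\S_{B\cap\pa H}(E))=0$. The key step I would then prove is that, with $\de=\de(n,\lambda)$ the constant of Lemma \ref{lemma delta},
\[
|{\bf II}_E|^2(B_{x,r})\ge\de\,r^{n-3}\qquad\text{for all }x\in\S_{B\cap\pa H}(E)\ \text{and all}\ 0<r<\dist(x,\pa B)\,.
\]
This I would establish by contradiction: if $|{\bf II}_E|^2(B_{x,r})<\de\,r^{n-3}$ for some such $x,r$, set $F=E^{x,r}$. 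Because $\Phi\in\X_*(\lambda)$ is autonomous ($\Phi^{x,r}=\Phi$), because $x\in\pa H$ and $0\in\pa H$ (so $H^{x,r}=H$), and because $B^{x,r}=(B-x)/r\supset B$ for $r<\dist(x,\pa B)$, the set $F$ is a minimizer of $\PHI$ in $(B,H)$ with $0\in\pa H$. Tracking the scaling of ${\rm II}$ and of $\H^{n-1}$ under the homothety $y\mapsto(y-x)/r$ (and noting that $R_{\cdot}(F)$ is unchanged near interior points of $B$ whether computed in $B$ or in $B^{x,r}$, regularity being local) gives $|{\bf II}_F|^2(B)=r^{3-n}\,|{\bf II}_E|^2(B_{x,r})<\de$, so Lemma \ref{lemma delta} yields $0\in R_B(F)$. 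On the other hand the excess is homothety-invariant, $\exc^H(F,0,\rho)=\exc^H(E,x,r\rho)$ for all $\rho>0$, so \eqref{singular set char} together with $x\in\S_B(E)$ forces $0\in\S_B(F)=M_B(F)\setminus R_B(F)$ --- a contradiction. From the claim, $\limsup_{r\to 0^+}|{\bf II}_E|^2(B_{x,r})/r^{n-3}\ge\de>0$ at every $x\in\S_{B\cap\pa H}(E)$.

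To finish I would observe that $\mu:=|{\bf II}_E|^2$ is, by hypothesis, a finite Borel --- hence Radon --- measure on $\R^n$, and that it is carried by $R_B(E)$, which is disjoint from $\S_B(E)$; in particular $\mu(\S_{B\cap\pa H}(E))=0$. Applying the standard upper density estimate for Radon measures (e.g. \cite[Theorem 3.26]{SimonLN}) on the set $\S_{B\cap\pa H}(E)$, where the $(n-3)$-dimensional upper density of $\mu$ is bounded below by a positive constant, one obtains $\H^{n-3}(\S_{B\cap\pa H}(E))\le C(n)\,\de^{-1}\,\mu(\S_{B\cap\pa H}(E))=0$. Combined with \eqref{n-3 interior} this gives $\H^{n-3}(\S_B(E))=0$.

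I do not expect a genuine obstacle here: the real content sits in Lemma \ref{lemma delta} (which itself rests on the second--variation curvature estimate of Lemma \ref{lemma potenziale stima II} and on the lower-dimensional facts \eqref{n-3 interior}--\eqref{n-2 boundary}). The only things to handle with care are the homothety bookkeeping --- the dilation factor $r^{3-n}$ for $|{\bf II}|^2$ and the invariances $\Phi^{x,r}=\Phi$, $H^{x,r}=H$, $\exc^H(F,0,\rho)=\exc^H(E,x,r\rho)$ --- and the simple but decisive point that, precisely because $|{\bf II}_E|^2$ charges no portion of the singular set, the density--comparison estimate upgrades an a priori finite bound on $\H^{n-3}(\S_{B\cap\pa H}(E))$ all the way to its vanishing.
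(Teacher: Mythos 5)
Your proposal is correct and is essentially the paper's own argument: in both cases the heart is the lower density bound $|{\bf II}_E|^2(B_{x,r})\ge\de\,r^{n-3}$ at every $x\in\S_{B\cap\pa H}(E)$, obtained from Lemma \ref{lemma delta} by exactly the scaling bookkeeping you describe (with $r^{3-n}$ for $|{\bf II}|^2$, $\Phi^{x,r}=\Phi$, $H^{x,r}=H$), combined with the decisive fact that $|{\bf II}_E|^2$ is concentrated on $R_B(E)$ and therefore gives no mass to the singular set, plus \eqref{n-3 interior} for the interior part. The only divergence is the endgame: you quote the standard upper-density comparison theorem for Radon measures, whereas the paper runs the maximal $2r$-net covering argument by hand, which yields the slightly stronger Minkowski-type statement $\lim_{r\to0^+}|I_r(\S_s)|/r^3=0$ (a refinement the paper itself remarks on) before concluding $\H^{n-3}(\S_{B\cap\pa H}(E))=0$.
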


\begin{proof}
  By Lemma \ref{lemma delta} and by scaling
  \begin{equation}
    \label{to which}
      |{\bf II}_E|^2(B_{x,r})\ge \de\,r^{n-3}\,,\qquad\forall x\in \S_{B\cap\pa H}(E)\,,\quad r<\dist(x,\pa B)\,.
  \end{equation}
  We now prove that, if we fix $s\in(0,1)$ and set $\S_s=\S_{B_s\cap\pa H}(E)$ for the sake of brevity, then
  \begin{equation}
    \label{r intorno}
      \lim_{r\to 0^+}\frac{|I_r(\S_s)|}{r^3}=0\,.
  \end{equation}
  Let $r<1-s$ and let $\{x_i\}_{i=1}^{N(r)}\subset\S_s$ be such that $|x_i-x_j|>2r$ for every $i\ne j$ and $\inf_i|x-x_i|\le 2r$ for every $x\in\S_s$, i.e. $\{x_i\}_{i=1}^{N(r)}$ is a maximal $2r$-net on $\S_s$. In this way, $\{B_{x_i,r}\}_{i=1}^{N(r)}$ is a finite disjoint family of balls to which we can apply \eqref{to which}, and such that $I_r(\S_s)$ is covered by $B_{x_i,3\,r}$. Hence,
  \begin{eqnarray*}
    |I_r(\S_s)|\le 3^n\,N(r)\,r^n\le\frac{3^n\,r^3}\de\,\sum_{i=1}^{N(r)}|{\bf II}_E|^2(B_{x_i,r})\le\frac{3^n\,r^3}\de\,|{\bf II}_E|^2(I_r(\S_s))\,.
  \end{eqnarray*}
  Since, by assumption, $|{\bf II}_E|^2(B)<\infty$, we have
  \[
  \lim_{r\to 0^+}|{\bf II}_E|^2(I_r(\S_s))=|{\bf II}_E|^2(\S_s)=0\,,
  \]
  where in the last identity we have used the fact that $|{\bf II}_E|^2$ is concentrated on $R_B(E)$. This proves \eqref{r intorno}, which immediately implies $\H^{n-3}(\S_s)=0$ (note that this could be directly inferred by the previous proof, however \eqref{r intorno} provides a slightly stronger information). By the arbitrariness of $s$ we complete the proof.
\end{proof}

\begin{proof}[Proof of Proposition \ref{lemma chiusura}]
  Let us consider a sequence $\{\Phi_h\}_{h\in\N}\subset\X_{**}(\lambda)$ such that $\Phi_h\to\Phi$ in $C^0({\mathbf S}^{n-1})$ as $h\to\infty$ for some $\Phi\in\X_*(\lambda)$, and let $E$ be a minimizer of $\PHI$ in $(B,H)$. We fix $s\in(0,1)$ and consider the variational problems
  \begin{equation}
    \label{chiusura1}
    \inf\Big\{\PHI_h(F;H\cap B)+\int_{F}g_h(x)\,dx:F\subset H\,,F\Delta E\subset B_s\Big\}\,,
  \end{equation}
  where we have set
  \[
  g_h=\varphi_h\ast\Big(\dist(\cdot,E)-\dist(\cdot,E^c)\Big)\,,
  \]
  for a sequence of smooth mollifiers \(\{\varphi_h\}_h\); in particular, $g_h \in C^\infty(\R^n)$ with $\Lip g_h \le 1$ for every $h\in\N$. Let now $E_h$ be a minimizer in \eqref{chiusura1}: we claim that $E_h\to E$ in $B$ as $h\to\infty$. Indeed, by \cite[Theorem 2.9]{dephilippismaggiCAPILLARI} there exists $G\subset H$ such that, up to subsequences, $E_h\to G$ locally in $B_s$ as $h\to\infty$. By comparing $E_h$ with $E$ in \eqref{chiusura1}, by lower semicontinuity (see  \cite[Equation (2.64)]{dephilippismaggiCAPILLARI}), and setting $g=\dist(\cdot,E)-\dist(\cdot,E^c)$, one has
  \[
  \PHI(G;H\cap B)+\int_G g\le\liminf_{h\to\infty}\PHI_h(E_h;H\cap B)+\int_{E_h}g_h\le\PHI(E;H\cap B_s)+\int_E\,g\,.
  \]
  By minimality of $E$ (note that $G\Delta E\subset B_s\cc B$), $\PHI(E;H\cap B)\le \PHI(G;H\cap B)$, and thus
  \[
  0\ge \int_G g-\int_E g=\int_{G\setminus E}\dist(x,E)\,dx+\int_{E\setminus G}\dist(x,E^c)\,dx\,.
  \]
  In particular, $|E\Delta G|=0$, that is, $E_h\to E$ locally in $B_s$, thus in $B$ by $E_h\Delta E\subset B_s$, as $h\to\infty$.

  Since $E_h$ is a minimizer for $\PHI_h+\int g_h$ on $(B_s,H)$, by Lemma \ref{lemma potenziale stima II} (and  \(\Lip g_h \le 1\))  we find
  \[
      \frac{|{\bf II}_{E_h}|^2(B_{x,r})}{r^{n-3}}\le C(n,\lambda)\,,\qquad\forall B_{x,2r}\cc B_s\,.
  \]
  Hence,  by Lemma \ref{lemma sci II2}, one finds
  \[
        |{\bf II}_E|^2(B_{x,r})<\infty,\qquad\forall B_{x,2r}\cc B_s\,.
  \]
  By Lemma \ref{lemma II implica n-3} we have $\H^{n-3}(\S_{B_{x,r}\cap\pa H}(E))=0$ for every $B_{x,2r}\cc B_s$. By covering and by the arbitrariness of $s$ we find $\H^{n-3}(\S_{B\cap\pa H}(E))=0$. This shows that $\Phi\in\X_{**}(\lambda)$.
\end{proof}

As explained in the introduction, Propositions \ref{lemma apertura} and \ref{lemma chiusura} imply Theorem \ref{lemma main}. We finally deduce Theorem \ref{thm main} from this last result.

\begin{proof}
  [Proof of Theorem \ref{thm main}] The proof is essentially the same as that of Lemma \ref{lemma X figo}. Let us briefly sketch it:   assume by contradiction that there exist  constants \(\lambda\ge 1\), \(\ell\ge 0\), \(\Lambda \ge 0\), \(r_0>0\),  an open set \(A\),  \(\Phi\in \X(A\cap H, \lambda, \ell)\) and \(E\) a \((\Lambda ,r_0)\)-minimizer of $\PHI$ in $(A,H)$ such that
  \[
  \H^{n-3}(\Sigma_{A\cap \partial H} (E))>0\,.
  \]
  According to \eqref{dens} we can find \(x_0\in \Sigma_{A\cap \partial H}(E) \) and \(r_h\to 0\) as \(h\to \infty\) such that
    \begin{equation}\label{sempre lei}
  \H^{n-3}_\infty(\Sigma_{A\cap \partial H} (E)\cap B_{x_0,r_h})>c(n) r_h^{n-3}.
  \end{equation}
Let us set  \(F_h=E^{x_0,r_h}\) and notice that \(F_h\) are \((\Lambda r_h ,r_0/r_h)\)-minimizer of $\PHI_h$ in $(A^{x_0,r_h},H)$ where \(\Phi_h(x,\nu)=\Phi(x_0+r_h x,\nu) \in \X(A^{x_0,r_h} \cap H, \lambda, \ell r_h )\).
According to Lemma \ref{lemma compactness} and arguing as in the proof of Lemma  \ref{lemma X figo} one finds  \(E_\infty\) a minimizer of \(\PHI_\infty\) in \((\R^n,H)\) where \(\Phi_\infty(\nu)=\Phi(x_0,\nu)\in \X_{*}(\lambda)\). However,  by  \eqref{sempre lei}, \eqref{usc Hinfinito} and \eqref{zerossezero}, we find $\H^{n-3}(\Sigma_{B\cap \partial H} (E_\infty))>0$,  a contradiction to Theorem \ref{lemma main}.
\end{proof}

\appendix

\section{First and second variations of anisotropic functionals} Lemma \ref{lemma potenziale stima II} relies on the second variation formulas for  anisotropic functionals. For the reader's convenience, and since this kind of computation is not so easily accessible in the literature, we include a derivation of these formulas.

We consider an open set with smooth boundary $\Omega$ in $\R^n$, a bounded open set $A$ with $A\cap\Omega\ne\emptyset$, and a set $E\subset\Om$  of finite perimeter in $A$. Given $\Phi\in\X_*(\lambda)$ and $g\in C^2(\R^n)$, we compute the first and second variation of
\[
(\PHI+\smallint g)(f_t(E))=\int_{A\cap\Om\cap\pa^*f_t(E)}\Phi(\nu_{f_t(E)})\,d\H^{n-1}+\int_{A\cap f_t(E)} g\,,
\]
where \(\{f_t\}_{|t|\le \e_0}\) is such that:
\begin{itemize}
\item[(i)]  $(x,t)\mapsto f_t(x)$ of class $C^1(\Om\times(-\e_0,\e_0);\Om)$ with $f_0=\Id$, $f_t(\Om)=\Om$ for every $|t|<\e_0$, and $t\in(-\e_0,\e_0)\mapsto f_t(x)$ of class $C^3((-\e_0,\e_0);\Om)$ uniformly with respect to $x\in\Om$;
\item[(ii)] \(\spt(f_t-\Id)\cc A\).
\end{itemize}
These conditions imply that
\begin{equation}\label{tangente1}
\frac {d}{dt} f_t(x)\cdot \nu_\Om(f_t(x))=0\,,\qquad  x\in \partial \Om\cap A\,,\quad |t|<\e_0\,.
\end{equation}
We also notice that, if we define $T,Z\in C^1_c(\Om;\R^n)$ by setting
\begin{equation}
  \label{T e Z}
  T(x)=\frac{d}{dt}\Big|_{t=0}f(x)\qquad \textrm{and}\qquad  Z(x)=\frac{d^2}{dt^2}\Big|_{t=0}f_t(x)\,,
\end{equation}
then we have, uniformly on $x\in\R^n$ as $t\to 0^+$,
\begin{equation}\label{taylorf}
f_t=\Id+tT+\frac{t^2}{2}Z+O(t^3)\,.
\end{equation}
By \eqref{tangente1} we find
\begin{equation}\label{tangente}
T\cdot \nu_\Om=0\,,\qquad \forall\, x\in \partial \Om\,.
\end{equation}
By differentiating \eqref{tangente1} with respect to \(t\) we obtain that
\begin{equation}\label{tangente2}
Z\cdot \nu_\Om=-T\cdot\II_{\Om}[T]\,,\qquad\forall x\in\pa\Om\,,
\end{equation}
where  \(\II_\Om:T_x\pa\Om\to T_x\pa\Om\) is the second fundamental form of \(\partial \Om\). (Note that $T(x)$ is a tangent vector to $\pa\Om$ at $x\in\pa\Om$ exactly by \eqref{tangente}.) We now recall two basic facts. Lemma \ref{lemma2} is consequence of the classical area formula, see for example \cite[Proposition 17.1]{maggiBOOK}, while Lemma \ref{lemma3} is a standard Taylor expansion, see \cite[Lemma 17.4]{maggiBOOK}.



\begin{lemma}\label{lemma2} If $f:\R^n\to\R^n$ is a Lipschitz diffeomorphism with $\det(\nabla f)>0$ on $\R^n$, then   $f(E)$ is a set of finite perimeter in $f(A)$, with $f(\pa^*E)=_{\H^{n-1}}\pa^*(f(E))$ and
\[
\nu_{f(E)}(f(x))=\frac{\cof(\nabla f(x))[\nu_E(x)]}{|\cof(\nabla f(x))[\nu_E(x)]|}\,,\qquad\mbox{for $\H^{n-1}$-a.e. $x\in\pa^*f(E)$}\,,
\]
where for any invertible  linear map  $L:\R^n\to\R^n$ one defines $\cof L= (\det L)\,(L^{-1})^*$. Moreover, for every  $G\subset A$, one has
\begin{equation}
\label{change of variables cofattore}
  \int_{f(G\cap\pa^*E)}\Phi(\nu_{f(E)}(y))\,d\H^{n-1}(y)=\int_{G\cap\pa^*E}\Phi\big(\cof(\nabla f(x))\,[\nu_{E}(x)]\big)\,d\H^{n-1}(x)\,.
\end{equation}
\end{lemma}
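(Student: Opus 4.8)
The plan is to deduce everything from the area formula. Recall that, by De Giorgi's structure theorem, $\pa^*E$ is countably $(n-1)$-rectifiable, $|D1_E|=\H^{n-1}\llcorner\pa^*E$, and $D1_E=-\,\nu_E\,\H^{n-1}\llcorner\pa^*E$. Fix $\phi\in C^1_c(f(A);\R^n)$ and put $\psi=\phi\circ f$, which is Lipschitz with compact support in $A$ since $f$ is a homeomorphism. By the change of variables for the bi-Lipschitz map $f$ (with $\det\nabla f>0$), $\int_{f(E)}\Div\phi\,dy=\int_E(\Div\phi)(f)\,\det\nabla f\,dx$; then, using the algebraic relation $\det\nabla f\,(\nabla f)^{-1}=(\cof\nabla f)^{*}$ together with the Piola (null-Lagrangian) identity $\Div(\cof\nabla f)=0$ — valid for Lipschitz $f$ by mollification, since $\Div_x\!\big((\cof\nabla g)^{*}[\eta\circ g]\big)=(\Div\eta)(g)\,\det\nabla g$ holds for every smooth $g$ — one gets $(\Div\phi)(f)\,\det\nabla f=\Div_x\!\big((\cof\nabla f)^{*}[\psi]\big)$ as $L^\infty$ functions. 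Applying the Gauss--Green formula for $E$ to the compactly supported field $(\cof\nabla f)^{*}[\psi]$ yields
\[
\int_{f(E)}\Div\phi\,dy=\int_{\pa^*E}\phi(f(x))\cdot\cof\nabla f(x)\,[\nu_E(x)]\,d\H^{n-1}(x)\,.
\]

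The next step is to transport this integral onto $f(\pa^*E)$ by the area formula on the rectifiable set $\pa^*E$, using that $f$ is injective and tangentially differentiable $\H^{n-1}$-a.e.\ there, with approximate tangent plane $T_x\pa^*E=\nu_E(x)^{\perp}$. The linear-algebra input I would use is the classical formula for the tangential Jacobian of an invertible $L$ along a hyperplane $\nu^{\perp}$: namely $J^{\nu^{\perp}}L=|\det L|\,|(L^{-1})^{*}\nu|=|\cof L\,[\nu]|$, while $L(\nu^{\perp})$ has unit normal $\cof L\,[\nu]/|\cof L\,[\nu]|$, the sign being fixed by $\det L>0$. Setting $\hat\nu(x)=\cof\nabla f(x)\,[\nu_E(x)]/|\cof\nabla f(x)\,[\nu_E(x)]|$ and dividing by $J^{\pa^*E}f(x)=|\cof\nabla f(x)\,[\nu_E(x)]|$, the displayed formula becomes $\int_{f(E)}\Div\phi\,dy=\int_{f(\pa^*E)}\phi(y)\cdot\hat\nu(f^{-1}(y))\,d\H^{n-1}(y)$. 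Since $f(\pa^*E)$ is $(n-1)$-rectifiable and $\H^{n-1}$-finite (because $\cof\nabla f\in L^\infty$ and $P(E;A)<\infty$), this identity says precisely that $f(E)$ has finite perimeter in $f(A)$ with $D1_{f(E)}=-\,\hat\nu\!\circ\! f^{-1}\,\H^{n-1}\llcorner f(\pa^*E)$; comparing with De Giorgi's representation of $D1_{f(E)}$ gives $\pa^*(f(E))=_{\H^{n-1}}f(\pa^*E)$ and $\nu_{f(E)}(f(x))=\hat\nu(x)$ for $\H^{n-1}$-a.e.\ $x\in\pa^*E$.

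Finally, \eqref{change of variables cofattore} drops out by one more application of the area formula on $\pa^*E$:
\[
\int_{f(G\cap\pa^*E)}\Phi(\nu_{f(E)}(y))\,d\H^{n-1}(y)=\int_{G\cap\pa^*E}\Phi(\hat\nu(x))\,|\cof\nabla f(x)\,[\nu_E(x)]|\,d\H^{n-1}(x)=\int_{G\cap\pa^*E}\Phi\big(\cof\nabla f(x)\,[\nu_E(x)]\big)\,d\H^{n-1}(x)\,,
\]
the last equality being the positive one-homogeneity of $\Phi$. I expect the only genuinely delicate points — and the main obstacle, when $f$ is merely Lipschitz rather than $C^1$ — to be the justification of the Piola identity and of the tangential area formula with the stated Jacobian on $\pa^*E$, and correspondingly the identification of the distributional normal trace of $(\cof\nabla f)^{*}[\psi]$ on $\pa^*E$ with its pointwise value; all of this is classical and already carried out in \cite[Proposition 17.1]{maggiBOOK}, the first point by mollification and the second by reducing $\pa^*E$ to $C^1$-charts and invoking Rademacher's theorem.
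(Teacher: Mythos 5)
Your argument is correct and is essentially the paper's own route: the paper gives no proof of Lemma \ref{lemma2}, simply declaring it a consequence of the classical area formula and citing \cite[Proposition 17.1]{maggiBOOK}, and your change-of-variables/Piola-identity/tangential-Jacobian argument is precisely the standard proof of that cited result. The two technical points you flag (the Piola identity for Lipschitz maps and the identification of the normal trace, i.e.\ the Gauss--Green step for the merely bounded field $(\cof\nabla f)^*[\psi]$) are indeed where the work lies, and deferring them to the same reference is consistent with the paper's level of detail; note also that in the paper's application $f_t=\Id+tT$ with $T\in C^1_c$, so these Lipschitz subtleties are not even needed there.
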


\begin{lemma}\label{lemma3}
If \(X,Y:\R^n\to\R^n\) are linear maps, then
\begin{equation}\label{det}
\det \Big(\Id+tX+\frac{t^2}{2} Y+O(t^3)\Big)=1+t\,\tr X+\frac{t^2}{2}\Big((\tr X)^2-\tr (X^2)+\tr Y\Big)+O(t^3)\,,
\end{equation}
\[
\Big(\Id+tX+\frac {t^2}{2} Y+O(t^3)\Big)^{-1}=\Id-t X+\frac{ t^2}{2} \Big(2 X^2-Y\Big)+O(t^3)\,,
\]
and thus
\[
\begin{split}
 \cof\Big(\Id+tX&+\frac{t^2}2\,Y+O(t^3)\Big)\\
 &=\Id+t\Big(\tr(X)\Id-X^*\Big)\label{cof}\\
 &+\frac{t^2}2\Big[\Big(\tr(X)^2-\tr(X^2)+\tr(Y)\Big)\Id+2\,(X^*)^2-2\tr(X)\,X^*-Y^*\Big]+O(t^3)\,.
\end{split}
\]
\end{lemma}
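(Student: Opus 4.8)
The plan is to establish the three expansions one at a time: the first two are self-contained, and the cofactor expansion follows mechanically from them together with the defining identity $\cof L=(\det L)\,(L^{-1})^*$. Throughout I write $M=M(t)=tX+\frac{t^2}{2}Y+O(t^3)$, so that $\|M(t)\|\le C\,|t|$ for $|t|$ small, with $C$ depending only on $\|X\|$, $\|Y\|$ and the constant hidden in the $O(t^3)$.

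For the determinant I would use the identity $\det(\Id+N)=\sum_{k=0}^{n}e_k(N)$, valid for any linear map $N$, where $e_k(N)$ is the $k$-th elementary symmetric function of the eigenvalues of $N$; in particular $e_0=1$, $e_1(N)=\tr N$ and $e_2(N)=\frac{1}{2}\big((\tr N)^2-\tr(N^2)\big)$. Since $e_k$ is homogeneous of degree $k$ in the entries of $N$, applying this with $N=M(t)$ makes $e_k(M(t))=O(t^3)$ for $k\ge 3$, while $\tr M(t)=t\,\tr X+\frac{t^2}{2}\tr Y+O(t^3)$ and $\tr\big(M(t)^2\big)=t^2\,\tr(X^2)+O(t^3)$; collecting the terms of order at most $2$ gives \eqref{det}. (Alternatively, one may differentiate Jacobi's formula $\varphi'=\varphi\,\tr\big((\Id+M)^{-1}M'\big)$, with $\varphi(t)=\det(\Id+M(t))$, once more at $t=0$, using $M(0)=0$, $M'(0)=X$, $M''(0)=Y$.) For the inverse, $\|M(t)\|<1$ when $|t|$ is small, so the Neumann series gives $(\Id+M)^{-1}=\Id-M+M^2-M^3+\cdots=\Id-M+M^2+O(\|M\|^3)$; substituting $M=tX+\frac{t^2}{2}Y+O(t^3)$ and $M^2=t^2X^2+O(t^3)$ and discarding orders $\ge 3$ yields the middle displayed formula in the statement.

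Finally, applying $\cof L=(\det L)\,(L^{-1})^*$ to $L=\Id+M(t)$ (invertible for small $|t|$), it remains only to multiply the scalar expansion of $\det(\Id+M)$ by the matrix expansion of $\big((\Id+M)^{-1}\big)^*$, the latter obtained from the previous step by transposition and $(X^2)^*=(X^*)^2$, and then to retain the terms of order at most $2$ in $t$; rearranging produces the claimed cofactor expansion. The only point needing (minimal) care is that the remainders compose well, i.e.\ that each $O(t^3)$ generated along the way keeps its implicit constant controlled by $\|X\|$, $\|Y\|$ and the original remainder; this is automatic since every operation used --- finitely many matrix products, the convergent Neumann series, and the polynomial identity for $\det(\Id+N)$ --- is continuous in quantities that remain bounded as $t\to 0$. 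I therefore do not expect any genuine obstacle: this is a routine Taylor expansion, recorded here (compare \cite[Lemma~17.4]{maggiBOOK}) only because it feeds the first and second variation formulas of Lemma~\ref{lemma potenziale stima II}.
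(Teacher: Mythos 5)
Your proof is correct: the elementary-symmetric-function expansion of the determinant, the Neumann series for the inverse, and the identity $\cof L=(\det L)\,(L^{-1})^*$ give exactly the three displayed expansions, with the remainders controlled as you note. The paper offers no proof of this lemma at all (it simply cites \cite[Lemma 17.4]{maggiBOOK} as a standard Taylor expansion), and your computation is precisely that standard argument.
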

We are now ready to compute the first and second variation of $\PHI+\smallint g$.

\begin{lemma}  If \(g\in C^2(A)\), then
\begin{equation}\label{firstderivativepot}
\frac{d}{dt}\Big|_{t=0}\int_{A\cap f_t(E)} g=\int_{A\cap\Om\cap\partial^* E} g\,( T \cdot \nu_E) \,d\H^{n-1}\,,
\end{equation}
and
\begin{equation}\label{secondderivativepot}
\begin{split}
\frac{d^2}{dt^2}\Big|_{t=0}\int_{A\cap f_t(E)} g&=\int_{A\cap\Om\cap\partial^* E} g\, (Z\cdot \nu_E)\, d\H^{n-1}
\\
&+\int_{A\cap\Om\cap\partial^* E} \Div (g\, T)\, (T\cdot \nu_E)-g\, (\nabla T[T]\cdot \nu_E)\, d\H^{n-1}\,.
\end{split}
\end{equation}
\end{lemma}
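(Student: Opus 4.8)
The plan is to turn both derivatives into volume integrals over the \emph{fixed} set $A\cap E$ by a change of variables, Taylor expand the integrand in $t$, and then transfer the resulting bulk integrands to the reduced boundary via the Gauss--Green theorem for sets of finite perimeter, keeping track of the portion of $\pa^*E$ lying on $\pa\Om$. Concretely: since $\spt(f_t-\Id)$ stays in a fixed compact subset of $A$ and $f_0=\Id$, the map $f_t$ is an orientation-preserving diffeomorphism of $\Om$ with $f_t(A)=A$ and $f_t(A\cap E)=A\cap f_t(E)$, so the area formula gives $\int_{A\cap f_t(E)}g\,dy=\int_{A\cap E}g(f_t(x))\,\det\nabla f_t(x)\,dx$. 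Expanding by \eqref{taylorf} and by the determinant formula \eqref{det} with $X=\nabla T$, $Y=\nabla Z$, one gets $g(f_t)=g+t\,\nabla g\cdot T+\frac{t^2}{2}(\nabla^2g[T]\cdot T+\nabla g\cdot Z)+O(t^3)$ and $\det\nabla f_t=1+t\,\Div T+\frac{t^2}{2}((\Div T)^2-\tr((\nabla T)^2)+\Div Z)+O(t^3)$ uniformly; multiplying, collecting powers of $t$ and differentiating under the integral sign yields $\frac{d}{dt}\big|_{0}\int_{A\cap f_t(E)}g=\int_{A\cap E}\Div(g\,T)$ and $\frac{d^2}{dt^2}\big|_{0}\int_{A\cap f_t(E)}g=\int_{A\cap E}I$, where
\[
I=\nabla^2g[T]\cdot T+\nabla g\cdot Z+2(\nabla g\cdot T)\,\Div T+g\big((\Div T)^2-\tr((\nabla T)^2)+\Div Z\big)\,.
\]

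Next I would integrate by parts. For the first variation, $gT\in C^1_c(A;\R^n)$ and the Gauss--Green theorem on $A\cap E$, combined with the decomposition $A\cap\pa^*E=(A\cap\Om\cap\pa^*E)\cup(A\cap\pa\Om\cap\pa^*E)$ up to $\H^{n-1}$-null sets --- on the second piece $\nu_E=\nu_\Om$, since $E\subset\Om$ and $\pa\Om$ is smooth, and there $T\cdot\nu_\Om=0$ by \eqref{tangente} --- gives \eqref{firstderivativepot}. For the second variation, the algebraic heart of the matter is the pointwise identity
\[
I=\Div\Big(g\,Z+\Div(gT)\,T-g\,\nabla T[T]\Big)\,,
\]
which is verified by expanding the right-hand side and invoking $\Div(\nabla T[T])=\nabla(\Div T)\cdot T+\tr((\nabla T)^2)$. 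Applying Gauss--Green to this vector field and splitting $\pa^*E$ as above, the $A\cap\pa\Om\cap\pa^*E$ contribution cancels: there $\nu_E=\nu_\Om$, $T\cdot\nu_\Om=0$, and $Z\cdot\nu_\Om=\nabla T[T]\cdot\nu_\Om$ --- both sides equal $-T\cdot\II_\Om[T]$, the first by \eqref{tangente2} and the second by differentiating $T\cdot\nu_\Om=0$ along $\pa\Om$ in the direction $T$ --- so that $g\,Z\cdot\nu_\Om-g\,\nabla T[T]\cdot\nu_\Om=0$ on that piece. What survives is precisely \eqref{secondderivativepot}.

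This lemma is essentially a computation, so there is no deep obstacle; the two points requiring care are exactly the $\pa\Om$ bookkeeping just described, and a minor regularity matter: making $\Div(g\,\nabla T[T])$ a classical derivative in the identity above needs $T\in C^2$, whereas condition (i) only gives $T\in C^1$ in $x$. This is immaterial --- either the deformations $f_t$ entering the intended application (the proof of Lemma \ref{lemma potenziale stima II}) are smooth, or one proves the integration-by-parts step by approximating $T$ in $C^1$ by $C^2$ vector fields, all terms being continuous with respect to the $C^1$-norm of $T$ while the left-hand sides of \eqref{firstderivativepot}--\eqref{secondderivativepot} are the manifestly stable quantities $\int_{A\cap E}\Div(gT)$ and $\int_{A\cap E}I$.
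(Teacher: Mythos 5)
Your proposal is correct and follows essentially the same route as the paper's proof: change of variables via the area formula, uniform Taylor expansion of $g(f_t)\det\nabla f_t$, rewriting the resulting bulk integrands as divergences (with a $C^2$-approximation of $T$ to justify the integration by parts), and splitting $\pa^*E$ into its part in $\Om$ and its part on $\pa\Om$, where the boundary contribution is killed by $T\cdot\nu_\Om=0$ and $Z\cdot\nu_\Om=\nabla T[T]\cdot\nu_\Om$ (both sides equal to $-T\cdot\II_\Om[T]$). The only cosmetic difference is that you package the whole second-order integrand into a single divergence identity, whereas the paper's Step one isolates the quadratic-in-$T$ part and treats the $g\,Z$ term separately; the content is identical.
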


\begin{proof} {\it Step one}: We notice the validity of the following formula: if $S\in C^1_c(A;\R^n)$ and \(E\subset\Omega\), then
\begin{eqnarray*}
  &&\int_{A\cap E}\,g\big[(\Div S)^2-\tr (\nabla S)^2\big]+2\,\Div S\, \nabla g \cdot S+\nabla^2 g[S]\cdot S
  \\
  &&\hspace{3cm}=\int_{\Om\cap A\cap\pa^*E} \Div (g\,S) (S\cdot\nu_E)-g \,\nabla S[S]\cdot\nu_E\,d\H^{n-1}
  \\
  &&\hspace{3.2cm}+\int_{A\cap\partial \Omega \cap \partial ^* E} \Div (g\,S) (S\cdot\nu_\Om)-g \,\nabla S[S]\cdot\nu_\Om\,d\H^{n-1}\,,
\end{eqnarray*}
where $E^{(1)}$ is the set of points of density one of $E$. Indeed, if $S\in C^2_c(A;\R^n)$, then the assertion follow by the divergence theorem and by the identity
\[
\begin{split}
g\big[(\Div S)^2-\tr (\nabla S)^2\big]&+2\,\Div S\, \nabla g \cdot S+\nabla^2 g[ S]\cdot S
=\Div(\Div (gS) S)-\Div(g \,\nabla S[S])\,.
\end{split}
\]
The case when $S\in C^1_c(A;\R^n)$ is then obtained by approximation.

\medskip

\noindent {\it Step two}: Since \(f_t(A)=A\), we find \(f_t(E)\cap A=f_t(E\cap A)\). Hence by the area formula,
\[
\int_{A\cap f_t(E)} g(y)\,dy=\int_{A\cap E} g(f_t(x))\det \nabla f_t(x) dx\,.
\]
By  \eqref{taylorf}, by \eqref{det} and by the Taylor expansion of $g$ we get
\[
\begin{split}
&\int_{A\cap f_t(E)}g(y)\,dy=\int_{A\cap E} g+t\int _{A\cap E} \nabla g \cdot T+g\,\Div T
\\
&+\frac{t^2}{2}\int _{A\cap E} g\big[\Div Z+(\Div T)^2-\tr (\nabla T)^2]+2\,\Div T\, \nabla g \cdot T+\nabla^2 g [T]\cdot T+\nabla g\cdot Z+O(t^3)\,.
\end{split}
\]
Inasmuch, $\Div(g\, T)=\nabla g \cdot T+g\,\Div T$ and $\Div(g\, Z)=\nabla g \cdot Z+g\,\Div Z$, by step one and by \eqref{tangente}, one finds \eqref{firstderivativepot} and
\[
\begin{split}
\frac{d^2}{dt^2}\Big|_{t=0}\int_{A\cap f_t(E)} g&=\int_{A\cap\Om\cap\pa^* E} g\, (Z\cdot \nu_E)\, d\H^{n-1}
\\
&+\int_{A\cap\Om\cap\partial^* E} \Div (g\, T)\, (T\cdot \nu_E)\, d\H^{n-1}\\
&-\int_{A\cap\Om\cap\partial^* E } g\, (\nabla T[T]\cdot \nu_E)\, d\H^{n-1}\\
&+\int_{A\cap\pa\Om\cap \partial^* E} g\, \big(Z\cdot \nu_\Om-\nabla T[T]\cdot \nu_\Om\big)\, d\H^{n-1}\,.
\end{split}
\]
We now complete the proof of \eqref{secondderivativepot} by showing that $\nabla T[T]\cdot \nu_\Om=Z\cdot\nu_\Om$. Indeed, by differentiating \eqref{tangente} along $T$ one finds $0=\nabla T[T]\cdot\nu_\Om+T\cdot\,\II_\Om[T]$, and then conclude by \eqref{tangente2}.
\end{proof}

\begin{lemma}\label{tens}We have
\begin{equation}\label{firstderivativetens}
\frac{d}{dt}\Big|_{t=0}\int_{A\cap\Om\cap\partial^*f_t(E)} \Phi(\nu_{f_t(E)})\,d\H^{n-1}=\int_{A\cap\Om\cap\partial^*E}\Phi(\nu_E)\Div T-\nabla T^*[\nu_E ]\cdot \nabla \Phi(\nu_E)\,d\H^{n-1}\,,
\end{equation}
and
\begin{equation}\label{secondderivativetens}
\begin{split}
\frac{d^2}{dt^2}\Big|_{t=0}\int_{A\cap\Om\cap\partial^*f_t(E)}& \Phi(\nu_{f_t(E)})\,d\H^{n-1}=\int_{A\cap\Om\cap\partial^*E}\Phi(\nu_E)\Div Z-\nabla Z^* [\nu_E] \cdot \nabla \Phi(\nu_E)\,d\H^{n-1}
\\
&+\int_{A\cap\Om\cap\partial^*E}\Phi(\nu_E)\big\{(\Div T)^2-\tr (\nabla T)^2\big\}\,d\H^{n-1}
\\
&+2\,\int_{A\cap\Om\cap\partial^*E}(\nabla T^*)^2[\nu_E]\cdot \nabla \Phi(\nu_E)-\Div T\nabla T^*[\nu_E]\cdot \nabla \Phi(\nu_E)\,d\H^{n-1}
\\
&+\int_{A\cap\Om\cap\pa^*E}\nabla^2 \Phi(\nu_E)\big[ \nabla T^*[\nu_E]\big]\cdot \nabla T^*[\nu_E]\,d\H^{n-1}\,.
\end{split}
\end{equation}
\end{lemma}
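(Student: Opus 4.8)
The plan is to compute the two variations directly from the change-of-variables formula \eqref{change of variables cofattore} of Lemma \ref{lemma2}, combined with the Taylor expansions of Lemma \ref{lemma3}, and then to integrate by parts on the (generically curved) hypersurface $A\cap\Om\cap\pa^*E$. First I would write, using Lemma \ref{lemma2} applied to $f_t$ (which is a Lipschitz diffeomorphism with $\det\nabla f_t>0$ for $|t|$ small, by \eqref{taylorf}),
\[
\int_{A\cap\Om\cap\pa^*f_t(E)}\Phi(\nu_{f_t(E)})\,d\H^{n-1}=\int_{A\cap\Om\cap\pa^*E}\Phi\big(\cof(\nabla f_t)[\nu_E]\big)\,d\H^{n-1}\,,
\]
where the key point is that the one-homogeneity of $\Phi$ kills the normalization $|\cof(\nabla f_t)[\nu_E]|$ appearing in the formula for $\nu_{f_t(E)}$: indeed $\Phi(w/|w|)\,|\cof(\nabla f_t)[\nu_E]|\,(\text{Jacobian factor})$ collapses to $\Phi(\cof(\nabla f_t)[\nu_E])$ since the surface Jacobian of $f_t$ restricted to $\pa^*E$ is exactly $|\cof(\nabla f_t)[\nu_E]|$. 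So the whole $t$-dependence is now inside the integrand $\Phi(\cof(\nabla f_t(x))[\nu_E(x)])$, evaluated on the \emph{fixed} surface $\pa^*E$, and I can differentiate under the integral sign.

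Next I would expand. By \eqref{taylorf} we have $\nabla f_t=\Id+t\nabla T+\tfrac{t^2}2\nabla Z+O(t^3)$, so by the cofactor expansion \eqref{cof} in Lemma \ref{lemma3} (with $X=\nabla T$, $Y=\nabla Z$),
\[
\cof(\nabla f_t)[\nu_E]=\nu_E+t\big(\Div T\,\nu_E-\nabla T^*[\nu_E]\big)+\frac{t^2}2\,V+O(t^3)\,,
\]
where $V=\big((\Div T)^2-\tr(\nabla T)^2+\Div Z\big)\nu_E+2(\nabla T^*)^2[\nu_E]-2\Div T\,\nabla T^*[\nu_E]-\nabla Z^*[\nu_E]$. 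Writing $w_t=\cof(\nabla f_t)[\nu_E]$, a second-order Taylor expansion of $\Phi$ along $w_t$ gives
\[
\Phi(w_t)=\Phi(\nu_E)+t\,\nabla\Phi(\nu_E)\cdot\dot w+\frac{t^2}2\Big(\nabla\Phi(\nu_E)\cdot\ddot w+\nabla^2\Phi(\nu_E)[\dot w]\cdot\dot w\Big)+O(t^3)\,,
\]
with $\dot w=\Div T\,\nu_E-\nabla T^*[\nu_E]$ and $\ddot w=V$. Reading off the coefficient of $t$ gives \eqref{firstderivativetens} immediately (note $\nabla\Phi(\nu_E)\cdot\nu_E=\Phi(\nu_E)$ by one-homogeneity of $\Phi$, which is what turns $\Div T\,\nabla\Phi(\nu_E)\cdot\nu_E$ into $\Phi(\nu_E)\Div T$). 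Reading off the coefficient of $t^2/2$ and substituting $V$: the term $\nabla^2\Phi(\nu_E)[\dot w]\cdot\dot w$ produces $\nabla^2\Phi(\nu_E)[\nabla T^*[\nu_E]]\cdot\nabla T^*[\nu_E]$ plus cross terms that vanish because $\nabla^2\Phi(\nu_E)[\nu_E]=0$ (again one-homogeneity, differentiating Euler's relation); the term $\nabla\Phi(\nu_E)\cdot V$ produces, using $\nabla\Phi(\nu_E)\cdot\nu_E=\Phi(\nu_E)$ once more, exactly the four groups of terms displayed in \eqref{secondderivativetens}: the $\Div Z$/$\nabla Z^*$ line, the $\Phi(\nu_E)\{(\Div T)^2-\tr(\nabla T)^2\}$ line, and the $2(\nabla T^*)^2[\nu_E]-2\Div T\,\nabla T^*[\nu_E]$ line paired against $\nabla\Phi(\nu_E)$.

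The main obstacle is not the algebra above, which is routine once the cofactor expansion is in hand, but rather justifying the differentiation under the integral and the uniformity of the Taylor remainders: one needs the $O(t^3)$ error in \eqref{taylorf} to be uniform in $x$ (guaranteed by hypothesis (i), the $C^3$-in-$t$ control uniform in $x$), and one needs $\nu_E$ to range in a compact subset of $\mathbf S^{n-1}$ so that $\Phi,\nabla\Phi,\nabla^2\Phi$ and their moduli of continuity on $\mathbf S^{n-1}$ give honest uniform bounds on the remainder; since $\Phi\in\X_*(\lambda)$ this is immediate. One also must check that $\pa^*f_t(E)=_{\H^{n-1}}f_t(\pa^*E)$ and that the surface measure transforms with Jacobian $|\cof(\nabla f_t)[\nu_E]|$, both of which are precisely the content of Lemma \ref{lemma2}. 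With these points dispatched, differentiating term by term and collecting the coefficients of $t$ and $t^2$ yields \eqref{firstderivativetens} and \eqref{secondderivativetens}. I would remark that, unlike the potential term in the previous lemma, no boundary contribution on $A\cap\pa\Om\cap\pa^*E$ appears here because we never integrate by parts in space — the anisotropic area integrand is handled purely by the change of variables, and the surface $\pa^*E$ stays fixed throughout.
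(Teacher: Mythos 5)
Your proposal is correct and takes essentially the same route as the paper: pull back to the fixed surface via the change-of-variables formula \eqref{change of variables cofattore}, expand the cofactor with Lemma \ref{lemma3} (with $X=\nabla T$, $Y=\nabla Z$), Taylor-expand $\Phi$ at $\nu_E$, and use $\nabla\Phi(\nu_E)\cdot\nu_E=\Phi(\nu_E)$ together with $\nabla^2\Phi(\nu_E)[\nu_E]=0$ to read off the coefficients of $t$ and $t^2/2$. The only differences are expository — you spell out the uniformity of the remainders and the identification $f_t(A\cap\Om\cap\pa^*E)=_{\H^{n-1}}A\cap\Om\cap\pa^*f_t(E)$, which the paper leaves implicit, and, as your closing remark correctly observes (despite the plan announced in your first sentence), no integration by parts on the hypersurface is needed.
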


\begin{proof} By  \eqref{taylorf}, Lemma \ref{lemma3}, and by the Taylor expansion of $\Phi$ at $\nu_E$, we get
\[
\begin{split}
\Phi\big(\cof(\nabla f_t(x))[\nu_E]\big)&=\Phi(\nu_E)+t\Big\{\Phi(\nu_E)\Div T-\nabla T^*[\nu_E ]\cdot \nabla \Phi(\nu_E)\Big\}
\\
&+\frac{t^2}{2}\Big\{ \Phi(\nu_E)\Div Z-\nabla Z^* [\nu_E] \cdot \nabla \Phi(\nu_E)
\\
&\qquad+\Phi(\nu_E)\big\{(\Div T)^2-\tr (\nabla T)^2\big\}-2\Div T\,\nabla T^*[\nu_E]\cdot \nabla \Phi(\nu_E)
\\
&\qquad+2(\nabla T^*)^2[\nu_E]\cdot \nabla \Phi(\nu_E)+\nabla^2 \Phi(\nu_E)\big[ \nabla T^*[\nu_E]\big]\cdot \nabla T^*[\nu_E]\, \Big\}+O(t^3)\,,
\end{split}
\]
where we have also used $\Phi(\nu_E)=\nabla\Phi(\nu_E)\cdot\nu_E$ and $\nabla^2\Phi(\nu_E)[\nu_E]=0$. By \eqref{change of variables cofattore} and by $f_t(A)=A$ we find \eqref{firstderivativetens} and \eqref{secondderivativetens}.
\end{proof}

We now come to the lemma that was used in the proof of Lemma \ref{lemma potenziale stima II}. In the following we define \(\II^\Phi_E\) by setting
\[
  \II^\Phi_E(x) =\nabla^2\Phi(\nu_E(x))\,\II_E(x)\qquad \forall \, x\in R_A(E)\,.
  \]
 Note that, by one-homogeneity of \(\Phi\), \(\nabla^2 \Phi(\nu_E)[\nu_E]=0\); therefore, by symmetry of $\nabla^2\Phi(\nu_E)$, the tensor \(\II^\Phi_E(x)\) is a well defined operator from \(T_{x} R_A(E)\) into itself.

\begin{lemma}\label{lemma variazione seconda}
  Let $\Phi\in\X_{*}(\lambda)$, $g\in C^2(\R^n)$, $A$ be a bounded open set, \(H\) an open  half-space and $E$ be a minimizer of $\PHI+\int g$ on $(A,H)$. Then
     \begin{equation}
    \label{second variation}
    \begin{split}
   \int_{R_A(E)}\zeta^2 \Phi(\nu_E)&  \tr [(\II^\Phi_E)^2]\,d\H^{n-1}\\
   &\le     \int_{R_A(E)}\Phi(\nu_E)^2\,\nabla^2\Phi(\nu_E)[\nabla\zeta]\cdot \nabla\zeta+\zeta^2\Phi(\nu_E) (\nabla g\cdot\nabla\Phi(\nu_E)) \,d\H^{n-1}\,,
  \end{split}
  \end{equation}
  for every $\zeta\in C^1_c(A)$ with $\spt\zeta\cap \S_A(E)=\emptyset$.  Moreover, there exists a constant $C=C(n,\lambda,\Lip(g))$ such that
  \begin{eqnarray}
    \label{second variation inequality appendix}
    \int_{R_A(E)}|{\rm II}_E|^2\,\zeta^2\,d\H^{n-1}\le C\,\int_{R_A(E)}|\nabla\zeta|^2+\zeta^2\,d\H^{n-1}\,,
  \end{eqnarray}
  whenever $\zeta\in C^1_c(A)$ with $\spt\zeta\cap \S_A(E)=\emptyset$.
 \end{lemma}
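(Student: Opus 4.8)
\smallskip
\noindent\emph{Plan of proof.} The plan is to combine the minimality of $E$ with the first and second variation formulas of the appendix, testing the stationarity and stability of $E$ along a one-parameter family whose initial velocity is a cutoff of the Cahn--Hoffman field $\nabla\Phi(\nu_E)$. Since $E$ minimizes $\PHI+\smallint g$ on $(A,H)$, and since (because $g\in\Lip(\R^n)$) $R_A(E)$ is a $C^{2,\a}$-manifold with boundary contained in $\pa H$, for every admissible family $\{f_t\}$ as in the appendix with $\Om=H$, $\spt(f_t-\Id)\cc A$ and $\spt(f_t-\Id)\cap\S_A(E)=\emptyset$, one has $\frac{d}{dt}\Big|_{t=0}(\PHI+\smallint g)(f_t(E))=0$ and $\frac{d^2}{dt^2}\Big|_{t=0}(\PHI+\smallint g)(f_t(E))\ge 0$. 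Letting $T$ range over all such velocities in \eqref{firstderivativepot}--\eqref{firstderivativetens} yields, on $R_A(E)$, the Euler--Lagrange equation $\tr\,\II^\Phi_E=-g$ together with the anisotropic Neumann condition $\nabla\Phi(\nu_E)\cdot\nu_H=0$ along $R_{A\cap\pa H}(E)$. Moreover, since $\pa H$ is flat we have $\II_\Om\equiv 0$, so by \eqref{tangente2} the acceleration $Z$ of any admissible family satisfies $Z\cdot\nu_H=0$ on $\pa H$, i.e. $Z$ is itself an admissible velocity; comparing \eqref{firstderivativepot}--\eqref{firstderivativetens} with \eqref{secondderivativepot}--\eqref{secondderivativetens}, the sum of all the $Z$-dependent terms in the second variation equals the first variation of $\PHI+\smallint g$ at $E$ along $Z$, hence vanishes. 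Thus, writing $Q_E(T)$ for the sum of \eqref{secondderivativepot} and \eqref{secondderivativetens} with the $Z$-terms removed (an integral over $R_A(E)$, quadratic in the velocity $T$), minimality reduces to the stability inequality $Q_E(T)\ge 0$ for every admissible $T$.

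Next I choose $T$. Let $\nu$ be a $C^{1,\a}$ extension of $\nu_E$ to a neighbourhood $U$ in $A$ of $R_A(E)\cap\spt\zeta$ --- for instance the normal of the nearest-point projection onto the $C^{2,\a}$ hypersurface $R_A(E)$ --- so that $\nabla\nu[\nu]=0$ and $\nabla\nu=\II_E$ along $R_A(E)$, and set $V=\nabla\Phi(\nu)$, $T=\zeta\,V$ (extended by $0$ off $U$); then $T\in C^1_c(A;\R^n)$ with $\spt T\cap\S_A(E)=\emptyset$. Crucially, by the Neumann condition $\nabla\Phi(\nu_E)\cdot\nu_H=0$ along $R_{A\cap\pa H}(E)$ one may arrange the extension $\nu$ so that $V\cdot\nu_H=0$ on a full $\pa H$-neighbourhood, whence $T$ is tangent to $\pa H$ there, its flow $f_t$ satisfies $f_t(H)=H$, and $T$ is admissible; so $Q_E(\zeta V)\ge 0$. (By contrast, the more naive choice $T=\zeta\,\Phi(\nu_E)\,\nu_E$, though it has the same normal component $T\cdot\nu_E=\zeta\,\Phi(\nu_E)$ along $R_A(E)$, fails to be tangent to $\pa H$, since $R_A(E)$ meets $\pa H$ with the anisotropic, not the Euclidean, orthogonality.)

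It remains to expand $Q_E(\zeta V)$. On $R_A(E)$ one uses the one-homogeneity identities $V\cdot\nu_E=\Phi(\nu_E)$ and $\nabla^2\Phi(\nu_E)[\nu_E]=0$, together with the chain-rule consequences $\nabla V^*[\nu_E]=0$, $\Div V=\tr\,\II^\Phi_E$, and $\nabla V^*|_{T_xR_A(E)}=\II^\Phi_E$. From $\nabla T^*=V\otimes\nabla\zeta+\zeta\,\nabla V^*$ these give $\nabla T^*[\nu_E]=\Phi(\nu_E)\,\nabla\zeta$, so the last term of \eqref{secondderivativetens} becomes exactly $\Phi(\nu_E)^2\,\nabla^2\Phi(\nu_E)[\nabla\zeta]\cdot\nabla\zeta$ (the normal part of $\nabla\zeta$ being annihilated by $\nabla^2\Phi(\nu_E)$). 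Expanding the remaining quadratic terms of \eqref{secondderivativepot}--\eqref{secondderivativetens}, the terms of mixed order $\zeta\,\nabla\zeta$ cancel --- a manifestation of the fact that, at a stationary point and because $\pa H$ is flat and the contact is anisotropically orthogonal, $Q_E$ sees only the normal velocity $T\cdot\nu_E=\zeta\,\Phi(\nu_E)$ --- while the terms of order $\zeta^2$ reorganize, after inserting the Euler--Lagrange equation $\tr\,\II^\Phi_E=-g$ (which turns $\zeta^2\Phi(\nu_E)(\tr\,\II^\Phi_E)^2$ into $+g^2\zeta^2\Phi(\nu_E)$ and the potential contribution $g\,\zeta^2\Phi(\nu_E)\,\Div V$ into $-g^2\zeta^2\Phi(\nu_E)$), into $\zeta^2\,\Phi(\nu_E)\,\nabla g\cdot\nabla\Phi(\nu_E)-\zeta^2\,\Phi(\nu_E)\,\tr[(\II^\Phi_E)^2]$. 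Altogether $0\le Q_E(\zeta V)=\int_{R_A(E)}\big(\Phi(\nu_E)^2\,\nabla^2\Phi(\nu_E)[\nabla\zeta]\cdot\nabla\zeta+\zeta^2\Phi(\nu_E)\,\nabla g\cdot\nabla\Phi(\nu_E)-\zeta^2\Phi(\nu_E)\,\tr[(\II^\Phi_E)^2]\big)\,d\H^{n-1}$, which is \eqref{second variation}. For \eqref{second variation inequality appendix} I bound, pointwise on $R_A(E)$, $\lambda^{-1}\le\Phi(\nu_E)\le\lambda$, $\nabla^2\Phi(\nu_E)[\nabla\zeta]\cdot\nabla\zeta\le\lambda\,|\nabla\zeta|^2$, $|\nabla g\cdot\nabla\Phi(\nu_E)|\le\lambda\,\Lip(g)$, and --- from \eqref{elliptic}, i.e. $\nabla^2\Phi(\nu_E)\ge\lambda^{-1}\Id$ on $T_xR_A(E)$, together with the elementary bound $\tr[(SB)^2]\ge\lambda^{-2}\tr[B^2]$ valid for symmetric $B$ and symmetric $S\ge\lambda^{-1}\Id$ --- the lower bound $\Phi(\nu_E)\,\tr[(\II^\Phi_E)^2]\ge\lambda^{-3}\,|\II_E|^2$; rearranging \eqref{second variation} gives \eqref{second variation inequality appendix} with $C=C(n,\lambda,\Lip(g))$.

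The main obstacle is the bookkeeping in the last paragraph: organizing the tangential/normal splittings so that the mixed terms cancel and the Euler--Lagrange equation is applied correctly, and in particular checking that the curvature part of the order-$\zeta^2$ terms assembles into the full squared Hilbert--Schmidt norm $\tr[(\II^\Phi_E)^2]$ rather than merely the squared anisotropic mean curvature $(\tr\,\II^\Phi_E)^2$ --- this is precisely what makes the resulting inequality control all the principal curvatures, as Lemma \ref{lemma potenziale stima II} requires. A secondary technical point is the construction of the $C^{1,\a}$ extension of the Cahn--Hoffman field $\nabla\Phi(\nu_E)$ that is tangent to $\pa H$ near the free boundary, for which one uses the up-to-the-boundary $C^{2,\a}$-regularity of $R_A(E)$ and the anisotropic Neumann condition.
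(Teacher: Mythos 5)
Your overall strategy is the same as the paper's (test stationarity and stability along $T=\zeta\,\nabla\Phi(N)$ for an extension $N$ of $\nu_E$, then use ellipticity), and your algebra and the final bound $\Phi(\nu_E)\,\tr[(\II^\Phi_E)^2]\ge\lambda^{-3}|\II_E|^2$ are fine, but there is a genuine gap in the construction of the test field. You require the extension $\nu$ to satisfy simultaneously (a) the full first--order prescription $\nabla\nu=\II_E$, $\nabla\nu[\nu]=0$ along $R_A(E)\cap\spt\zeta$ (this is what gives you $a=0$, i.e. $\nabla V^*[\nu_E]=0$, $\Div V=\tr\,\II^\Phi_E$, $\nabla V^*=\II^\Phi_E$ on the tangent space), and (b) $\nabla\Phi(\nu)\cdot\nu_H=0$ on a relatively open subset of $\pa H$ containing the contact points in $\spt\zeta$ (needed for $\Id+tT$, or the flow of $T$, to preserve $H$). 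These are incompatible in general: at a contact point $x$, (a) fixes the whole gradient of $\nu$, so for $\tau\in T_x\pa H$, writing $\tau=\tau^t+(\tau\cdot\nu_E)\nu_E$ with $\tau^t\in T_xR_A(E)$, the derivative along $\tau$ of $y\mapsto\nabla\Phi(\nu(y))\cdot\nu_H$ equals $\nabla^2\Phi(\nu_E)\big[\II_E[\tau^t]\big]\cdot\nu_H$, and (b) forces this to vanish for \emph{every} such $\tau$. Differentiating Young's law $\nabla\Phi(\nu_E)\cdot\nu_H=0$ along the contact set gives this only for $\tau$ tangent to the $(n-2)$--dimensional contact set, not for the remaining direction of $T_x\pa H$; already in the isotropic, orthogonal--contact case the requirement amounts to $\II_E[\nu_H]=0$ along the free boundary, which fails e.g. for a half catenoid meeting a plane orthogonally along a meridian. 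So the field you propose (nearest--point projection extension, ``arranged'' to be tangent to $\pa H$) does not exist in general: enforcing (b) necessarily destroys (a) at the contact set. This is not a secondary technical point; it is exactly where the free boundary enters.

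The repair is what the paper does: prescribe only the \emph{values} of the extension, i.e. take $N\in C^1$ with $N=\nu_E$ on $R_A(E)\cap\spt\zeta$ and $\nabla\Phi(N)\cdot\nu_H=0$ on $\pa H\cap\spt\zeta$ (compatible at the level of values precisely by Young's law), set $T=\zeta\,\nabla\Phi(N)$, and accept that on $R_A(E)$ one only knows $\nabla N=\II_E+a\otimes\nu_E$ for some unknown continuous $a$. One still has $(\nabla T)^*[\nu_E]=\Phi(\nu_E)\nabla\zeta$ (so your $\Gamma_4$--term, giving $\Phi^2\,\nabla^2\Phi[\nabla\zeta]\cdot\nabla\zeta$, is unchanged) and $\Div T=\nabla\Phi\cdot\nabla\zeta+\zeta\,\tr\,\II^\Phi_E$, while the $a$--dependent contributions cancel between the $(\Div T)^2-\tr((\nabla T)^2)$ term and the $2\,(\nabla T^*)^2[\nu_E]\cdot\nabla\Phi$ term; inserting the Euler--Lagrange equation $\tr\,\II^\Phi_E+g=0$ then yields exactly \eqref{second variation}. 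Your other steps --- treating the $Z$--terms via stationarity (or simply using the affine family $\Id+tT$, for which $Z=0$, as the paper does), the cancellation of the mixed $\zeta\,\nabla\zeta$ terms, and the passage from \eqref{second variation} to \eqref{second variation inequality appendix} --- are correct and agree with the paper.
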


\begin{proof} As proved in \cite[Section 2.4]{dephilippismaggiCAPILLARI} we have
\[
\nabla \Phi(\nu_E(x))\cdot \nu_H=0\qquad \forall\,x\in R_A(E)\cap \pa H\,.
\]
If $\zeta\in C^1_c(A\setminus\S_A(E))$, then there exists $N\in C^1(\R^n;\R^n)$ such that
\begin{eqnarray}\label{N1}
  N=\nu_E\,,&&\qquad\mbox{on $R_A(E)\cap\spt\,\zeta$}\,,
  \\\label{N2}
  \nabla\Phi(N)\cdot\nu_H=0\,,&&\qquad\mbox{on $R_A(E)\cap\pa H\cap\spt\,\zeta$}\,.
\end{eqnarray}
 We set  $T=\zeta\,\nabla\Phi(N)\in C_{c}^1(A;\R^n)$ and we note that, by \eqref{N2}, \(f_t(x)=x+tT(x)\) defines a family of admissible variations for \(|t|\le \e_0\) and $\e_0$ suitably small. Since $f_t$ is affine in $t$, by \eqref{T e Z}, one has $Z=0$. In particular, by Lemma \ref{lemma2}, Lemma \ref{lemma3}, and by minimality of $E$,
 \begin{eqnarray}\label{prima}
 &&0=\frac{d}{dt}\Big|_{t=0}(\Phi+\smallint g)(f_t(E))=\int_{A\cap H\cap\pa^*E}\,g\,(T\cdot\nu_E)+\Phi\,\Div T-(\nabla T)^*[\nu_E]\cdot\nabla\Phi\,d\H^{n-1}\,,\hspace{1cm}
 \\\nonumber
 &&0\le\frac{d^2}{dt^2}\Big|_{t=0}(\Phi+\smallint g)(f_t(E))=\int_{A\cap H\cap\pa^*E}\,\Gamma_1+\Gamma_2+\Gamma_3+\Gamma_4\,d\H^{n-1}\,,
 \end{eqnarray}
 where, setting for simplicity \(\Phi=\Phi(\nu_E)\), \(\nabla \Phi=\nabla \Phi(\nu_E)\), and \(\nabla^2 \Phi=\nabla^2 \Phi(\nu_E)\), one has
 \begin{eqnarray*}
   \Gamma_1&=&\Div(g\,T)\,(T\cdot\nu_E)-g\,\nabla T[T]\cdot\nu_E\,,
   \\
   \Gamma_2&=&\big((\Div T)^2-\tr((\nabla T)^2\big)\,\Phi\,,
   \\
   \Gamma_3&=&2\,\Big((\nabla T^*)^2[\nu_E]\cdot \nabla \Phi-\Div T\,\nabla T^*[\nu_E]\cdot \nabla \Phi\Big)\,,
   \\
   \Gamma_4&=&\,\nabla^2 \Phi\,\big[ \nabla T^*[\nu_E]\big]\cdot \nabla T^*[\nu_E]\,.
 \end{eqnarray*}
 We start by noticing that \eqref{N1} gives
 \[
 \nabla N(x)=\II_E(x)+a(x)\otimes\nu_E(x)\qquad \forall\,x\in R_A(E)\cap \spt \zeta\,,
 \]
 where $\II_E(x)$ is extended to be zero on $(T_x R_A(E))^\perp$ and \(a: R_A(E)\to \R^n\) is a continuous vector field. Hence
 \[
    \nabla T=\nabla\Phi\otimes\nabla\zeta+\zeta\,\II_E^{\Phi}+\zeta\,\nabla^2\Phi[a]\otimes\nu_E\,,\qquad\mbox{on $R_A(E)$}\,.
 \]
 By $\nabla^2\Phi\,[\nu_E]=0$ and the symmetry of $\nabla^2\Phi$ one finds $\tr(\nabla^2\Phi[a]\otimes\nu_E)=0$, so that
   \begin{eqnarray}\label{divT}
    \Div T=\nabla\Phi\cdot\nabla\zeta+\zeta\,H^\Phi_E\,,\qquad\mbox{on $R_A(E)$}\,,
 \end{eqnarray}
 where we have set
 \[
 H_E^\Phi=\tr (\II^{\Phi}_E)=\tr (\nabla^2 \Phi \,\II_E)\,.
 \]
 Moreover, by $\nabla\Phi\cdot\nu_E=\Phi$ and again by $\nabla^2\Phi\,[\nu_E]=0$ we find $(\nabla T)^*[\nu_E]=\Phi\,\nabla\zeta$ and $T\cdot\nu_E=\zeta\Phi$, so that \eqref{prima} gives
 \[
 0=\int_{A\cap H\cap\pa^*E}\,\big(g+H^\Phi_E)\,\Phi\,\zeta\,d\H^{n-1}\,.
 \]
 The validity of this condition for every $\zeta\in C^1_c(A\setminus\S_A(E))$ gives the well-know stationarity condition
 \begin{equation}
    \label{euler}
    H^\Phi_E +g=0\,,\qquad\forall\,x\in R_A(E)\,.
 \end{equation}
 We now compute $\Gamma_1$. By $\nabla\Phi\cdot\nu_E=\Phi$, we find
 \[
 \nabla T[T]=\zeta\,(\nabla\zeta\cdot\nabla\Phi)\,\nabla\Phi+\zeta^2\,\II_E^\Phi[\nabla\Phi]+\zeta^2\,\Phi\,\nabla^2\Phi[a]\,,
 \]
 so that, by $\II_E^\Phi[\nabla\Phi]\cdot\nu_E=0$ and by $\nabla^2\Phi[a]\cdot\nu_E=0$ (which follow by the symmetry of $\nabla^2\Phi$ and by $\nabla^2\Phi[\nu]=0$), we find
 \[
 \nabla T[T]\cdot\nu_E=\zeta\,\Phi\,(\nabla\zeta\cdot\nabla\Phi)\,.
 \]
 By \eqref{divT}, \eqref{euler} and a simple computation one gets
 \begin{eqnarray*}
   \Gamma_1=\Big((\nabla\Phi\cdot\nabla g)+g\,H_E^\Phi\Big)\zeta^2\,\Phi=\Big((\nabla\Phi\cdot\nabla g)-(H_E^\Phi)^2\Big)\zeta^2\,\Phi\,.
 \end{eqnarray*}
 We now start computing $\Gamma_2$. By \eqref{divT} we have
 \[
 (\Div T)^2=(\nabla\Phi\cdot\nabla\zeta)^2+\zeta^2\,(H_E^\Phi)^2+2\,\zeta\,H_E^\Phi\,(\nabla\Phi\cdot\nabla\zeta)\,;
 \]
 at the same time, writing $\nabla T=X+Y$ where $X=\nabla\Phi\otimes\nabla\zeta+\zeta\II_E^\Phi$ and $Y=\zeta \nabla^2\Phi[a]\otimes\nu_E$, and noticing that $Y^2=0$, while
 \begin{eqnarray*}
 \tr(Y\,X)=\tr(X\,Y)&=&\tr\Big(\zeta(\nabla\zeta\cdot\nabla^2\Phi[a])\,\nabla\Phi\otimes\nu_E+\zeta^2\,\II_E^\Phi\,\nabla^2\Phi[a]\otimes\nu_E\Big)
 \\
 &=&
 \zeta(\nabla\zeta\cdot\nabla^2\Phi[a])\,\Phi\,,
 \\
 X^2&=&(\nabla\zeta\cdot\nabla\Phi)\nabla\Phi\otimes\nabla\zeta+\zeta^2(\II_E^\Phi)^2+
    \zeta\,\II_E^\Phi[\nabla\Phi]\otimes \nabla\zeta+\zeta\,\nabla\Phi\otimes(\II_E^\Phi)^*[\nabla\zeta]\,,
 \end{eqnarray*}
 we find that,
 \begin{eqnarray*}
      \tr((\nabla T)^2)&=&
      (\nabla\zeta\cdot\nabla\Phi)^2+\zeta^2\tr[(\II_E^\Phi)^2]+
    2\,\zeta\,(\nabla\zeta\cdot\II_\Phi[\nabla\Phi])+2\,(\nabla\zeta\cdot\nabla^2\Phi[a])\,\Phi\,.
 \end{eqnarray*}
 Hence,
 \begin{eqnarray*}
 \Gamma_2&=&\zeta^2\,(H_E^\Phi)^2\Phi+2\zeta(\nabla\zeta\cdot\nabla\Phi)H_E^\Phi\,\Phi-\zeta^2\tr[(\II_E^\Phi)^2]\,\Phi
 \\
 &&-2\zeta\,(\nabla\zeta\cdot\II_E^\Phi[\nabla\Phi])\Phi-2\,(\nabla\zeta\cdot\nabla^2\Phi[a])\,\Phi^2\,.
  \end{eqnarray*}
  We now compute $\Gamma_3$. By \eqref{divT} and $(\nabla T)^*[\nu_E]=\Phi\,\nabla\zeta$, we find
  \begin{eqnarray*}
    \Div T\,\nabla T^*[\nu_E]\cdot\nabla\Phi=
    (\nabla\zeta\cdot\nabla\Phi)^2\Phi+\zeta H_E^\Phi\,(\nabla\zeta\cdot\nabla\Phi)\,\Phi\,.
  \end{eqnarray*}
  At the same time, writing $\nabla T=X+Y$ with $X$ and $Y$ as above, we find
  \begin{eqnarray*}
     (X^*)^2&=&(\nabla\zeta\cdot\nabla\Phi)\nabla\zeta\otimes\nabla\Phi+\zeta^2(\II_E^\Phi)^2+
    \zeta\,\nabla\zeta\otimes\II_E^\Phi[\nabla\Phi]+\zeta\,(\II_E^\Phi)^*[\nabla\zeta]\otimes \nabla\Phi\,,
    \\
    Y^*X^*&=&\zeta(\nabla\zeta\cdot\nabla^2\Phi[a])\,\nu_E\otimes\nabla\Phi+\zeta^2\,(\nu_E\otimes \nabla^2\Phi[a])\,\II_E^\Phi
    \\
    X^*Y^*&=&\zeta\,\Phi\,\nabla\zeta\otimes\nabla^2\Phi[a]\,.
  \end{eqnarray*}
  By taking into account that $(Y^*)^2=0$ (as $Y^2=0$) and by exploiting once more that $\nabla^2\Phi[\nu_E]=0$ and $\II_E^\Phi[\nu_E]=0$, we find that
  \[
  [(\nabla T)^*]^2[\nu_E]=(\nabla\zeta\cdot\nabla\Phi)\,\Phi\,\nabla\zeta+\zeta\,\Phi\,(\II_E^\Phi)^*[\nabla\zeta]
  +\zeta(\nabla\zeta\cdot\nabla^2\Phi[a])\,\Phi\,\nu_E\,,
  \]
  so that
  \[
  [(\nabla T)^*]^2[\nu_E]\cdot\nabla\Phi=(\nabla\zeta\cdot\nabla\Phi)^2\,\Phi+\zeta\,\nabla\zeta\cdot\,\II_E^\Phi[\nabla\Phi]\,\Phi
  +\zeta(\nabla\zeta\cdot\nabla^2\Phi[a])\,\Phi^2\,.
  \]
  In conclusion,
  \[
    \Gamma_3=2\Big(\zeta\,\nabla\zeta\cdot \II_E^\Phi[\nabla\Phi]\,\Phi+\zeta (\nabla\zeta\cdot\nabla^2\Phi[a])\,\Phi^2
    -\zeta H_E^\Phi\,(\nabla\zeta\cdot\nabla\Phi)\,\Phi\Big)\,,
  \]
  so that
  \[
  \Gamma_1+\Gamma_2+\Gamma_3=\Big(\nabla\Phi\cdot\nabla g-\tr[(\II_E^\Phi)^2]\Big)\,\zeta^2\,\Phi\,.
  \]
  On noticing that $\Gamma_4=\Phi^2\,\nabla^2 \Phi\,\big[\nabla\zeta]\cdot \nabla\zeta$, we conclude the proof of \eqref{second variation}. By \eqref{elliptic}, one has $\nabla^2\Phi\ge(1/\lambda)\Id_{T_x(R_A(E))}$ for every $x\in R_A(E)$, and thus $\tr[(\II_E^\Phi)^2]\ge \lambda^{-2}\,|\II_E|^2$. Hence, \eqref{second variation} implies \eqref{second variation inequality appendix}.
  \end{proof}

%

\def\cprime{$'$}
\begin{thebibliography}{DPM14}

\bibitem[DPM14]{dephilippismaggiCAPILLARI}
G.~De~Philippis and F.~Maggi.
\newblock Regularity of free boundaries in anisotropic capillarity problems and
  the validity of {Y}oung's law.
\newblock 2014.
\newblock preprint arXiv:1402.0549.

\bibitem[EG92]{EvansGariepyBOOK}
L.~C. Evans and R.~F. Gariepy.
\newblock {\em Measure theory and fine properties of functions}.
\newblock Studies in Advanced Mathematics. CRC Press, Boca Raton, FL, 1992.

\bibitem[Gr{\"u}87]{gruter2}
M.~Gr{\"u}ter.
\newblock Optimal regularity for codimension one minimal surfaces with a free
  boundary.
\newblock {\em Manuscripta Math.}, 58(3):295--343, 1987.

\bibitem[Mag12]{maggiBOOK}
F.~Maggi.
\newblock {\em Sets of finite perimeter and geometric variational problems},
  volume 135 of {\em Cambridge Studies in Advanced Mathematics}.
\newblock Cambridge University Press, Cambridge, 2012.
\newblock An introduction to {G}eometric {M}easure {T}heory.

\bibitem[Mor91]{MorganClifford}
F.~Morgan.
\newblock The cone over the {C}lifford torus in {${\bf R}^4$} is
  {$\Phi$}-minimizing.
\newblock {\em Math. Ann.}, 289(2):341--354, 1991.

\bibitem[Sim83]{SimonLN}
L.~Simon.
\newblock {\em Lectures on geometric measure theory}, volume~3 of {\em
  Proceedings of the Centre for Mathematical Analysis}.
\newblock Australian National University, Centre for Mathematical Analysis,
  Canberra, 1983.

\bibitem[SSA77]{schoensimonalmgren}
R.~Schoen, L.~Simon, and F.~J.~Jr. Almgren.
\newblock Regularity and singularity estimates on hypersurfaces minimizing
  parametric elliptic variational integrals. {I}, {II}.
\newblock {\em Acta Math.}, 139(3-4):217--265, 1977.

\bibitem[SZ99]{SternZum2}
P.~Sternberg and K.~Zumbrun.
\newblock On the connectedness of boundaries of sets minimizing perimeter
  subject to a volume constraint.
\newblock {\em Comm. Anal. Geom.}, 7(1):199--220, 1999.

\bibitem[Whi86]{whitemodp}
B.~White.
\newblock A regularity theorem for minimizing hypersurfaces modulo {$p$}.
\newblock In {\em Geometric measure theory and the calculus of variations
  ({A}rcata, {C}alif., 1984)}, volume~44 of {\em Proc. Sympos. Pure Math.},
  pages 413--427. Amer. Math. Soc., Providence, RI, 1986.

\end{thebibliography}

\def\cprime{$'$}

\end{document}